\def\section{\@startsection {section}{1}{\z@}{-3.5ex plus -1ex minus -.2ex}{2.3 ex plus .2ex}{\centering\large\bf}}
\theoremstyle{plain} 
\newtheorem{theorem}{\noindent\bf Theorem}[section] 
\newtheorem{lemma}[theorem]{\noindent\bf Lemma}
\newtheorem{corollary}[theorem]{\noindent\bf Corollary}
\newtheorem{proposition}[theorem]{\noindent\bf Proposition}
\theoremstyle{definition} 
\newtheorem{definition}[theorem]{\noindent\bf Definition}
\newtheorem{remark}[theorem]{\noindent\bf Remark}
\newtheorem{example}[theorem]{\noindent\bf Example}
\newcommand{\Vol}[0]{\operatorname{Vol}}
\def\address#1#2{\begingroup
\noindent\parbox[t]{7.8cm}{
\small{\scshape\ignorespaces#1}\par\vskip1ex
\noindent\small{\itshape E-mail address}
\/: #2\par\vskip4ex}\hfill
\endgroup}
\title{Logarithmic Chow semistability of polarized toric manifolds}
\author{Satoshi Nakamura} 
\date{}
\begin{document}

\maketitle

%\newpage

\footnote{ 
2010 \textit{Mathematics Subject Classification}.
Primary 53C56; Secondary 14L24.
}
\footnote{ 
\textit{Key words and phrases}.
Toric manifolds, Weight polytopes, Log Chow semistability, Log K-semistability, Conical K\"{a}hler Einstein metric.
}
%\footnote{ 
%$^{*}$Partly supported by the Grant-in-Aid for Scientific Research (A),
%Japan Society for the Promotion of Science.

%}

\begin{abstract}
%Mathematical symbols and formulas should not be used in the abstract.
The logarithmic Chow semistability is a notion of Geometric Invariant Theory for the pair consists of varieties and its divisors. 
In this paper we introduce a obstruction of semistability for polarized toric manifolds and its toric divisors.
As its application, we show the implication from the asymptotic log Chow semistability to the log K-semistability by combinatorial arguments.
Furthermore we give a non-semistable example which has a conical K\"{a}hler Einstein metric.
\end{abstract}

\section{Introduction}

%Logarithmic Chow semistability (log Chow semistability for short) is a generalization of Chow semistability.

Let $X$ be a compact complex manifold and $L\to X$ an ample line bundle.
We call this pair $(X,L)$ a polarized manifold.
The well-known Donaldson-Tian-Yau conjecture claims that $X$ has constant scalar curvature K\"{a}hler metrics in $c_1(L)$ if and only if $(X,L)$ is stable in the sense of Geometric Invariant Theory (GIT for short).
This conjecture has the logarithmic generalization which claims that
$X$ has constant scalar curvature conical K\"{a}hler metric in  $c_1(L)$ with cone angle $2\pi\beta$ along a divisor $D$ if and only if $(X,D,L,\beta)$ is stable in the sense of GIT.
The one of difficulty of this problem is that there are various notions of stability.
%In order to find appropriate stability, 
Therefore it is important to see the relation among various stabilities and the relation between each stability and the existence of such metrics.

In this paper, we mainly consider the logarithmic Chow semistability and logarithmic K-semistability for the pair consists of polarized toric manifolds and its toric divisors.

It is well-known that the one to one correspondence between $n$-dimensional integral Delzant polytopes and $n$-dimensional compact toric manifolds with $(\mathbb{C}^*)^n$-equivariant very ample line bundles.
Futhermore every facet of integral Delzant polytopes corresponds to a divisor, called toric divisor, which is invariant under $(\mathbb{C}^*)^n$-action.

The following is the main theorem of this paper. 
Set $E_P(i):=\#\{P\cap(\mathbb{Z}/i)^n\}$ and $T_{iP}$ be the standard complex torus in $SL(E_P(i))$.
(We refer to later sections for notations.)

\begin{theorem}\label{main}
Let $P\subset\mathbb{R}^n$ be a $n$-dim integral Delzant polytope, $F$ a facet of $P$ and $\beta \in (0,1]$ a cone angle.
For a positive integer $i$, the Chow form of $(X_P, D_F, L_P^i, \beta)$
%, that is $R_{X_P}^{(2n)}\otimes R_{D_F}^{(1-\beta)(n+1)}$ defined by the Kodaira embedding by $L_P^i$, 
is $T_{iP}$-semistable if and only if  
\[ 
E_P(i)  \biggl( 2i \int_P g d\nu + (1-\beta)\int_F g d\sigma \biggr)
\geq \biggl( 2i \mathrm{Vol}(P) + (1-\beta)\mathrm{Vol}(F) \biggr) \sum_{\mathbf{a} \in P \cap (\mathbb{Z}/i)^n} g(\mathbf{a})
\]
holds for all concave piecewise linear function $g :P \to \mathbb{R}$ in $\mathrm{PL}(P,i)$.
\end{theorem}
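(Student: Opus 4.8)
The plan is to recast $T_{iP}$-semistability of the logarithmic Chow point through the Hilbert--Mumford numerical criterion and then evaluate the resulting weight by an explicit toric computation. First I would recall that the logarithmic Chow form of $(X_P,D_F,L_P^i,\beta)$ is, by its definition, a point in the projectivization of a $T_{iP}$-representation assembled from the Chow form $R_{X_P}$ of the variety together with the Chow form $R_{D_F}$ of the divisor, the latter entering with the weight $(1-\beta)$. Since $T_{iP}$ is the diagonal torus of $SL(E_P(i))$, its one-parameter subgroups are indexed by integer vectors $a=(a_{\mathbf a})_{\mathbf a\in P\cap(\mathbb Z/i)^n}$ subject to the trace-zero relation $\sum_{\mathbf a}a_{\mathbf a}=0$, and by the Hilbert--Mumford criterion the Chow point is $T_{iP}$-semistable if and only if the Mumford weight is nonnegative (with the paper's sign convention) for every such $a$. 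Interpolating the values $a_{\mathbf a}$ by a function $g\in\mathrm{PL}(P,i)$ with $g(\mathbf a)=a_{\mathbf a}$ converts the criterion into a statement about piecewise-linear functions.

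The technical heart is the combinatorial evaluation of this weight, which I would carry out factor by factor using the standard description of the Chow weight of a projective toric variety as a volume/lattice-point pairing. For the variety factor this gives, up to a fixed positive constant,
\[
E_P(i)\int_P g\,d\nu-\mathrm{Vol}(P)\sum_{\mathbf a\in P\cap(\mathbb Z/i)^n}g(\mathbf a),
\]
the full lattice-point sum being exactly the $SL(E_P(i))$-normalization, i.e.\ the trace term. For the divisor factor, $D_F$ lies in the coordinate subspace of $\mathbb P^{E_P(i)-1}$ spanned by the lattice points of $F$, so $R_{D_F}$ involves only those coordinates; computing its weight in the \emph{ambient} $SL(E_P(i))$-linearization produces
\[
E_P(i)\int_F g\,d\sigma-\mathrm{Vol}(F)\sum_{\mathbf a\in P\cap(\mathbb Z/i)^n}g(\mathbf a),
\]
where the same ambient normalization yields once more the full sum over $P\cap(\mathbb Z/i)^n$ rather than the facet sum. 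Both expressions are invariant under adding a constant to $g$, the cross terms cancelling because $\int_P$ is paired with $\mathrm{Vol}(P)$ and $\int_F$ with $\mathrm{Vol}(F)$ while each sum is paired with $E_P(i)$; this is the analytic reflection of the trace-zero constraint.

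Forming the combination dictated by the definition of the logarithmic Chow form, the Mumford weight of the logarithmic Chow point under $\lambda_a$ equals, up to a fixed positive constant,
\[
2i\Big(E_P(i)\int_P g\,d\nu-\mathrm{Vol}(P)\sum_{\mathbf a}g(\mathbf a)\Big)+(1-\beta)\Big(E_P(i)\int_F g\,d\sigma-\mathrm{Vol}(F)\sum_{\mathbf a}g(\mathbf a)\Big),
\]
so that its nonnegativity for all $a$ is precisely the asserted inequality. Two reductions then complete the argument. First, because both brackets are constant-invariant, the relation $\sum_{\mathbf a}a_{\mathbf a}=0$ may be dropped and the inequality tested on all of $\mathrm{PL}(P,i)$ modulo affine functions. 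Second, a standard convexity argument shows that the weight depends on $a$ only through its concave piecewise-linear envelope, so it suffices to verify the inequality for concave $g\in\mathrm{PL}(P,i)$, which is exactly the statement of the theorem.

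I expect the main obstacle to be the combinatorial weight computation itself, and in particular the careful bookkeeping of the ambient $SL(E_P(i))$-normalization for the divisor: one must justify that, although $R_{D_F}$ only involves the facet coordinates, its weight is normalized against the full count $E_P(i)$ and the full sum $\sum_{\mathbf a\in P\cap(\mathbb Z/i)^n}g(\mathbf a)$, which is what produces the uniform right-hand side. Checking that affine-linear $g$ lie in the kernel of the total weight functional (so the trace constraint is harmless) and tracking the $2i$ versus $(1-\beta)$ coefficients are the points where the precise constants must be controlled.
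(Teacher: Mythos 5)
Your proposal is correct and follows essentially the same route as the paper: both reduce to the diagonal torus, evaluate the (log) Chow weight through the GKZ--KSZ weight-polytope description --- whose support function is exactly $i^n(n+1)!\int_P g_\varphi\,d\nu$ (resp.\ $i^{n-1}n!\int_F g_\varphi\,d\sigma$) of the concave envelope --- and combine the variety and divisor contributions with the coefficients $2i$ and $(1-\beta)$ against the common trace normalization $\sum_{\mathbf a}g(\mathbf a)$. The paper phrases the criterion as membership of the normalized vector $d_{iP}$ in the Minkowski sum $2n!\,\mathrm{Ch}(iP)+(1-\beta)(n+1)!\,\mathrm{Ch}(iF)$ and then dualizes via support functions, which is just the convex-dual formulation of your Hilbert--Mumford one-parameter-subgroup argument.
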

%For $\beta =1$, logarithmic Chow semistability is nothing but the classical Chow semistability and Theorem \ref{main} reduces to the result of Ono in \cite{Ono2}.

The organization of this paper is as follows.
In section 2, we review some fundamentals of GIT for later use.
In section 3, we define  logarithmic Chow semistability in the general setting.
In section 4, we consider  logarithmic Chow semistability for polarized toric manifolds and its toric divisors and prove Theorem \ref{main}.
In section 5, we compare with asymptotic logarithmic Chow semistability and logarithmic K-semistability as one of applications of Theorem \ref{main}.
In section 6, as the other applications of Theorem \ref{main}, we give an example of asymptotic log Chow unstable manifold which admits conical K\"{a}hler Einstein metrics.

{\bf Acknowlegdements.}
The author would like to thank Professor Shigetoshi Bando and Doctor Ryosuke Takahashi for several helpful comments and constant encouragement.
He also would like to thank Professor Naoto Yotsutani for careful reading the draft of this paper.
%%%%%%%%%%%%%%%%%%%%%%%%%%%%%%%%%%%%%%%%%%%%%%%%%%%%%%%%%%%%%%%%%%

\section{GIT stability}

We recall some facts on GIT stability, see for example  \cite{Mum2}.
Let $V$ be a finite dimensional complex vector space with linear action of a reductive Lie group $G$. 
We say non zero vector $v \in V$ is $G$-semistable if the closure of the orbit $G\cdot v$ does not contain $0\in V$.
It is well-known the following criterion for semistability.

\begin{proposition}\label{Hilbert-Mumford}
$v\in V$ is $G$-semistable if and only if $v$ is $T$-semistable for all maximal torus $T \subset G$.
\end{proposition}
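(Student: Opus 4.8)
The plan is to prove the two implications separately. The ``only if'' direction is immediate from orbit containment, while the ``if'' direction rests on the one-parameter-subgroup form of the Hilbert--Mumford criterion.

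First I would dispose of the ``only if'' direction. Since $T\subset G$ for any maximal torus $T$, we have $T\cdot v\subset G\cdot v$ and hence $\overline{T\cdot v}\subset\overline{G\cdot v}$. Thus if $v$ is $G$-semistable, so that $0\notin\overline{G\cdot v}$, then $0\notin\overline{T\cdot v}$ as well, i.e. $v$ is $T$-semistable. As $T$ was arbitrary, this holds for every maximal torus simultaneously.

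For the ``if'' direction I would argue by contraposition. Suppose $v$ is $G$-unstable, so $0\in\overline{G\cdot v}$. The key input is the Hilbert--Mumford criterion in its one-parameter form: $0\in\overline{G\cdot v}$ if and only if there is a one-parameter subgroup $\lambda:\mathbb{C}^*\to G$ with $\lim_{t\to 0}\lambda(t)\cdot v=0$. Granting this, I pick such a destabilizing $\lambda$. Its image $\lambda(\mathbb{C}^*)$ is an algebraic torus in $G$, and every torus of a reductive group lies in some maximal torus $T$; hence $\lambda(t)\cdot v\in T\cdot v$ for all $t$, and $0=\lim_{t\to 0}\lambda(t)\cdot v\in\overline{T\cdot v}$. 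Therefore $v$ is $T$-unstable for that particular maximal torus. Contrapositively, if $v$ is $T$-semistable for every maximal torus $T$, then $v$ is $G$-semistable, which is what we want.

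The main obstacle is precisely the one-parameter form of Hilbert--Mumford invoked above, namely that an orbit can be degenerated to the origin only along some \emph{algebraic} one-parameter subgroup. I expect to cite this from Mumford's GIT rather than reprove it, since it is the genuinely hard part: it requires either the valuative criterion of properness together with Iwahori's theorem on bounded subgroups, in order to promote an arbitrary degenerating sequence $g_n\cdot v\to 0$ to an honest one-parameter subgroup, or equivalently the Kempf--Ness moment-map argument on the analytic side. Once this input is granted, the reduction from $G$ to maximal tori is purely formal, as the two paragraphs above show; the only supporting fact needed is that the Zariski closure of the image of a one-parameter subgroup is a torus and therefore sits inside a maximal torus.
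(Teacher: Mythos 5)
Your proof is correct; the paper itself gives no argument for this proposition, simply citing Mumford's GIT as the source, and your reduction (the trivial orbit-containment direction plus the contrapositive argument via a destabilizing one-parameter subgroup lying in some maximal torus) is the standard one, with the genuinely hard input --- the one-parameter-subgroup form of the Hilbert--Mumford criterion --- correctly identified and deferred to \cite{Mum2} just as the paper does.
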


Therefore it is important to study the torus action.
Let $T$ be a algebraic torus $(\mathbb{C}^*)^N$ for some $N$ in $G$.
Then $V$ can be decomposed as 
\[
V = \sum_{\chi\in\chi(T)}\Set{v \in V | t\cdot v = \chi(t)v, \forall t \in T},
\]
where $\chi(T)$ is the character group of $T$. 
Note that $\chi(T)$ is isomorphic to $\mathbb{Z}^N$ since we can express each $\chi \in \chi(T)$ as a Laurent monomial
$\chi(t_1, \dots, t_N)=t_1^{a_1}\cdots t_N^{a_N}$ where $t_i \in \mathbb{C}^*$ and $a_i\in \mathbb{Z}$.

\begin{definition}
Let $v=\sum_{\chi\in\chi(T)}v_{\chi}$ be a non zero vector in $V$. 
The weight polytope $\mathrm{Wt}_T(v)$ of $v$ is the convex hull of $\Set{\chi \in \chi(T) | v_{\chi} \neq 0}$ 
in $\chi(T)\otimes_{\mathbb{Z}}\mathbb{R} \cong \mathbb{R}^N$.
\end{definition}

$T$-semistability can be visualize by the weight polytope.

\begin{proposition}
$v$ is $T$-semistable if and only if the weight polytope $\mathrm{Wt}_T(v)$ of $v$ contains the origin.
\end{proposition}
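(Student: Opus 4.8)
The plan is to translate everything into the language of one-parameter subgroups (cocharacters) of $T=(\mathbb{C}^*)^N$ and their pairing with characters, and then prove the two implications separately by convexity arguments. A cocharacter is a homomorphism $\lambda:\mathbb{C}^*\to T$, $\lambda(s)=(s^{b_1},\dots,s^{b_N})$, recorded by $\mathbf{b}=(b_1,\dots,b_N)\in\mathbb{Z}^N$; its natural pairing with a character $\chi=(a_1,\dots,a_N)$ is $\langle\chi,\mathbf{b}\rangle=\sum_i a_ib_i$. Writing $v=\sum_{\chi}v_\chi$, one computes $\lambda(s)\cdot v=\sum_\chi s^{\langle\chi,\mathbf{b}\rangle}v_\chi$, so the behaviour of the orbit as $s\to 0$ is governed by the signs of the integers $\langle\chi,\mathbf{b}\rangle$ as $\chi$ ranges over the support $S=\Set{\chi | v_\chi\neq0}$, whose convex hull is exactly $\mathrm{Wt}_T(v)$.

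First I would prove that if $0\notin\mathrm{Wt}_T(v)$ then $v$ is not $T$-semistable. Since $\mathrm{Wt}_T(v)$ is a compact convex set missing the origin, the separating hyperplane theorem supplies a vector $\mathbf{b}\in\mathbb{R}^N$ and a constant $c>0$ with $\langle\chi,\mathbf{b}\rangle\ge c$ for every $\chi\in\mathrm{Wt}_T(v)$. A small perturbation of $\mathbf{b}$ preserves these finitely many strict inequalities on $S$, so I may take $\mathbf{b}$ rational and then integral after clearing denominators. For the associated cocharacter $\lambda$ every exponent $\langle\chi,\mathbf{b}\rangle$ is strictly positive, hence $\lambda(s)\cdot v\to 0$ as $s\to 0$; thus $0\in\overline{T\cdot v}$ and $v$ is unstable.

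The reverse implication---if $0\in\mathrm{Wt}_T(v)$ then $v$ is $T$-semistable---is the main obstacle, since one must rule out the origin lying in the closure of the \emph{whole} orbit, not merely along a single one-parameter degeneration. My plan is to produce a $T$-invariant function distinguishing $v$ from $0$. Because $0$ lies in the convex hull of the integer points $S$, the system $\sum_{\chi\in S}t_\chi\chi=0$, $\sum_{\chi\in S}t_\chi=1$, $t_\chi\ge0$ cuts out a nonempty rational polytope and so admits a rational solution; clearing denominators yields non-negative integers $m_\chi$, not all zero, with $\sum_{\chi\in S}m_\chi\chi=0$. For each $\chi\in S$ I choose a linear functional $\ell_\chi$ on the weight space with $\ell_\chi(v_\chi)\neq0$ and set $F(w)=\prod_{\chi\in S}\ell_\chi(w_\chi)^{m_\chi}$. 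Since $t\in T$ scales the $\chi$-component by $\chi(t)$, the total character of $F$ is $\sum_\chi m_\chi\chi=0$, so $F$ is $T$-invariant; moreover $F(v)\neq0$ while $F(0)=0$ because $F$ is homogeneous of positive degree $\sum_\chi m_\chi\ge1$. As $F$ is continuous and identically equal to $F(v)$ on the orbit $T\cdot v$, it equals $F(v)\neq0$ on all of $\overline{T\cdot v}$, whence $0\notin\overline{T\cdot v}$ and $v$ is semistable. The only delicate points to verify carefully are the rationality (hence integrality) of the convex representation of the origin and the invariance computation for $F$; both are routine once the setup is in place.
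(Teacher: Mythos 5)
Your proof is correct and complete. Note that the paper itself offers no proof of this proposition: it is stated as a standard fact of torus GIT with a pointer to Mumford's book, so there is nothing internal to compare against. Your argument is the classical one, and it has the virtue of working directly with the paper's definition of semistability as ``$0\notin\overline{T\cdot v}$'' in both directions: the separating-hyperplane/one-parameter-subgroup degeneration handles instability when the origin is missed, and the $T$-invariant monomial $F(w)=\prod_{\chi}\ell_\chi(w_\chi)^{m_\chi}$ (with $\sum_\chi m_\chi\chi=0$ obtained from a rational convex representation of the origin) separates $v$ from $0$ when the origin is contained. The two points you flag as delicate are indeed fine: the polytope $\{t_\chi\ge 0,\ \sum t_\chi=1,\ \sum t_\chi\chi=0\}$ is nonempty and cut out over $\mathbb{Q}$, hence has rational vertices, and the character of $F$ under $t\in T$ is exactly $\prod_\chi\chi(t)^{m_\chi}=1$. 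One could instead deduce the converse direction from the Hilbert--Mumford criterion (no destabilizing one-parameter subgroup exists when $0$ lies in the weight polytope, again by separation), but your invariant-function argument is more elementary and avoids invoking that theorem.
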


For laters arguments we consider the following situation.
Let $H = (\mathbb{C^*})^{N+1}$ acting on $V$ and $T$ be a subtorus
\begin{equation}
\label{subtorus}
T:= \Set{ \bigl(t_1, \dots , t_{N}, (t_1\cdots t_{N})^{-1}\bigr) | (t_1, \dots , t_{N})\in (\mathbb{C^*})^{N} } 
\cong (\mathbb{C}^*)^{N}. 
\end{equation}
Then the weight polytope $\mathrm{Wt}_T(v)$  is coincide with $\pi (\mathrm{Wt}_H(v))$ 
where $\pi : \mathbb{R}^{N+1} \to \mathbb{R}^{N}$ is the linear map defined as $(x_1, \dots , x_{N+1}) \mapsto (x_1-x_{N+1}, \dots, x_{N}-x_{N+1})$.
Therefore we see the following.

\begin{proposition}\label{weight polytope}
$v$ is $T$-semistable if and only if there exists $t\in \mathbb{R}$ such that $(t, \dots , t) \in \mathrm{Wt}_H(v)$.
\end{proposition}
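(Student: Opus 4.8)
The plan is to deduce the statement directly from the two facts already recorded just above it, namely the weight-polytope criterion ($v$ is $T$-semistable if and only if $0 \in \mathrm{Wt}_T(v)$) together with the identity $\mathrm{Wt}_T(v) = \pi(\mathrm{Wt}_H(v))$. Combining these, $v$ is $T$-semistable if and only if $0 \in \pi(\mathrm{Wt}_H(v))$, so the whole proposition reduces to the purely linear-algebraic claim that $0 \in \pi(\mathrm{Wt}_H(v))$ holds precisely when $\mathrm{Wt}_H(v)$ meets the diagonal line $\{(t,\dots,t) : t \in \mathbb{R}\}$.

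To settle that claim I would compute the fiber $\pi^{-1}(0)$. Since $\pi(x_1,\dots,x_{N+1}) = (x_1 - x_{N+1}, \dots, x_N - x_{N+1})$, a point $x$ satisfies $\pi(x) = 0$ exactly when $x_1 = \dots = x_N = x_{N+1}$, that is, $\pi^{-1}(0)$ is the diagonal. Hence $0 \in \pi(\mathrm{Wt}_H(v))$ if and only if some point of $\mathrm{Wt}_H(v)$ lies on the diagonal, which is precisely the asserted condition that $(t,\dots,t) \in \mathrm{Wt}_H(v)$ for some $t \in \mathbb{R}$. This completes the chain of equivalences.

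If one wishes the argument to be self-contained rather than quoting the identity $\mathrm{Wt}_T(v) = \pi(\mathrm{Wt}_H(v))$, the one genuinely substantive step is to verify that restriction of characters along the inclusion $T \hookrightarrow H$ is realized by $\pi$. Concretely, writing $v = \sum_{\chi} v_{\chi}$ in its $H$-weight decomposition and substituting $t_{N+1} = (t_1 \cdots t_N)^{-1}$ into a monomial character $\chi = (a_1,\dots,a_{N+1})$ turns it into the $T$-character $(a_1 - a_{N+1}, \dots, a_N - a_{N+1}) = \pi(\chi)$; since weight vectors for distinct $H$-characters are linearly independent, no cancellation occurs and the $T$-weight support of $v$ is exactly the $\pi$-image of its $H$-weight support, whence $\mathrm{Wt}_T(v) = \pi(\mathrm{Wt}_H(v))$ by linearity of $\pi$. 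This character computation, though elementary, is the only place where the specific form of the subtorus $T$ in \eqref{subtorus} actually enters, and so it is the real (if mild) obstacle; everything else is the immediate chaining of equivalences described above.
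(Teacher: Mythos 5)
Your argument is correct and is exactly the route the paper takes: it chains the criterion $0\in\mathrm{Wt}_T(v)$ with the identity $\mathrm{Wt}_T(v)=\pi(\mathrm{Wt}_H(v))$ stated just before the proposition, and observes that $\pi^{-1}(0)$ is the diagonal. Your additional verification that restriction of characters along $T\hookrightarrow H$ is realized by $\pi$ (with no cancellation between distinct $H$-weight components) is a correct filling-in of the step the paper leaves implicit.
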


%%%%%%%%%%%%%%%%%%%%%%%%%%%%%%%%%%%%%%%%%%%%%%%%%%%
\section{Log Chow semistability}

\subsection{General definition}

First we recall definition of the Chow form of projective varieties. 
See \cite{Mum1, Mum2} for more detail.

Let $V$ be a finite dimensional complex vector space 
and $X \subset \mathbb{P}(V)$ be a $n$-dimensional irreducible projective variety of degree $d$.
Then 
\[
Z_X :=\Set{ (H_0, \dots, H_n) \in \mathbb{P}(V^*)^{n+1} | (\cap_{i=0}^n H_i)\cap X \neq \emptyset}
\]
has codimension 1 in $\mathbb{P}(V^*)^{n+1}$ and multi-dergree $(d,\dots, d)$, 
so there exists the {\it Chow form} $R_X \in (\mathrm{Sym}^d V)^{\otimes (n+1)}$, unique up to scalar multiplication, 
which vanishes on $Z_X$.
%\section{Log Chow semistability: Toric case}

Although $(\mathrm{Sym}^d V)^{\otimes (n+1)}$ has the natural $SL(V)$-action induced from the natural $SL(V)$-action on $\mathbb{P}(V)$, 
we introduce a twisted action $SL(V) \times (\mathrm{Sym}^d V)^{\otimes (n+1)} \to (\mathrm{Sym}^d V)^{\otimes (n+1)}$ in order to define the log Chow semistability.
For the fixed $\alpha \in \mathbb{R}$, 
we define as $(\exp u, v) \mapsto \exp(\alpha u)v$
where $u \in \mathrm{Lie}(SL(V))$ and the action of $\exp(\alpha u)$ for $v$ is the natural one.
Then we define $R_X^{(\alpha)}$ as the Chow form of $X$ with $\alpha$-twisted $SL(V)$-action.
Note that $R_X^{(1)}$ is the ordinary Chow form $R_X$ with the natural $SL(V)$-action.

Next we define the log Chow semistability (\cite{WX}). 
Let $X$ be as above and $D$ be a irreducible divisor of degree $d'$ on $X$ and $\beta \in (0,1]$.

\begin{definition}
The pair $(X,D,\beta)$ is {\it log Chow semistable} 
if $R_X^{(2n!)}\otimes R_D^{((1-\beta)(n+1)!)}$ is $SL(V)$-semistable in 
$(\mathrm{Sym}^d V)^{\otimes (n+1)} \otimes (\mathrm{Sym}^{d'} V)^{\otimes n}$.
The pair $(X,D,\beta)$ is {\it log Chow unstable} if it is not log Chow semistable.
\end{definition}

%In addition we define asymptotic log Chow semistability.

%Here $X$ is a compact complex manifold with ample line bundle $L$ and $D$ is a irreducible divisor on $X$ and 
%Let $(X,L)$ be a polarized algebraic manifold where $L$ be a ample line bundle over $X$
%and let $D$ and $\beta$ be as above.
%Let $X_k \subset \mathbb{P}(H^0(X,L^k)^*) = \mathbb{P}(V_k)$ be the image of the Kodaira embedding of $X$
%and let $D_k \subset \mathbb{P}(V_k)$ is the image of this embedding of $D$.

\begin{definition}
Let  $(X,L)$ be a polarized variety, $D$ be a irreducible divisor on $X$ and $\beta \in (0,1]$.  
For $k\gg 0$, the pair $(X,D,L^k,\beta)$ is {\it log Chow semistable}
if $(\varphi_k(X), \varphi_k(D), \beta)$ is log Chow semistable
where $\varphi_k : X \to \mathbb{P}(H^0(X,L^k)^*)$ is the Kodaira embedding.
Further $(X,D,L,\beta)$ is {\it asymptotically log Chow semistable} if $(X,D,L^k,\beta)$ is log Chow semistable for all $k\gg 0$.

%if $R_{X_k}^{(2n)}\otimes R_{D_k}^{((1-\beta)(n+1))}$ is $SL(V_k)$-semistable in 
%$(\mathrm{Sym}^{d_k} V_k)^{\otimes (n+1)} \otimes (\mathrm{Sym}^{d'_k} V_k)^{\otimes n}$ for any sufficiently large $k \in \mathbb{N}$.
%Here $d_k$ and $d'_k$ are degree of $X_k$ and $D_k$ respectively and $n=\dim X$.
\end{definition}

\begin{remark}
(1) If we take $\beta =1$ then the log Chow semistability reduced to the Chow semistaility of $X$ defined in \cite{Mum1, Mum2}.

%\begin{remark}
(2) The ratio $2n!:(1-\beta)(n+1)!$ is chosen so that 
asymptotic log Chow semistability implies log  K-semistability as discussed in Proposition \ref{K-semistability}.
%See also \cite{WX}.
%\end{remark}

%\begin{remark}
(3) The log Chow semistability can be extended to $\mathbb{R}$-divisor $D=\sum_i (1-\beta_i)D_i$
where $D_i$ are defferent irreducible divisors and $\beta_i \in (0,1]$
by considering the tensor product $R_X^{(2n!)}\otimes ( \otimes_i R_{D_i}^{((1-\beta_i)(n+1)!)})$ 
%However we consider single divisor case as above for simplicity.
\end{remark}

\subsection{A necessary condition}

We introduce a necessary condition for log Chow semistability of $(X,D,\beta)$
where $X \subset \mathbb{C}P^N$ is a $n$-dimensional irreducible subvariety and $D \subset X$ is an irreducible divisor.

Let $H :=(\mathbb{C}^*)^{N+1} \subset GL(N+1, \mathbb{C})$ be the torus consists of invertible diagonal matrix.
Then the subtorus $T$ defined by \eqref{subtorus} is a maximal torus of $SL(N+1,\mathbb{C})$.
By Proposition \ref{Hilbert-Mumford}, $T$-semistability of $R_X^{(2n!)}\otimes R_D^{((1-\beta)(n+1)!)}$  is a necessary condition of log Chow semistability of $(X,D,\beta)$.
Proposition \ref{weight polytope} implies the following.

\begin{proposition}\label{T-semistability}
$R_X^{(2n!)}\otimes R_D^{((1-\beta)(n+1)!)}$ is $T$-semistable if and only if
there exists $t\in\mathbb{R}$ such that 
\[
(t,\dots , t) \in \mathrm{Wt}_H(R_X^{(2n!)}\otimes R_D^{((1-\beta)(n+1)!)}) 
\subset \mathrm{Aff}_{\mathbb{R}}\bigl(\mathrm{Wt}_H(R_X^{(2n!)}\otimes R_D^{((1-\beta)(n+1)!)})\bigr).
\]
Here $\mathrm{Aff}_{\mathbb{R}}\bigl(\mathrm{Wt}_H(R_X^{(2n!)}\otimes R_D^{((1-\beta)(n+1)!)})\bigr)$
is the affine hull of the weight polytope 
$\mathrm{Wt}_H(R_X^{(2n!)}\otimes R_D^{((1-\beta)(n+1)!)})$
in $\chi(H)\otimes_{\mathbb{Z}}\mathbb{R} \cong \mathbb{R}^{N+1}$.
\end{proposition}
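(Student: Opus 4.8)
The plan is to obtain this statement as the specialization of Proposition \ref{weight polytope} to the single vector
\[
v := R_X^{(2n!)}\otimes R_D^{((1-\beta)(n+1)!)}
\in (\mathrm{Sym}^d V)^{\otimes (n+1)} \otimes (\mathrm{Sym}^{d'} V)^{\otimes n},
\]
so that no new argument is required beyond checking that the hypotheses of that proposition are met in the present setting.

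First I would record the ambient data. The torus $T$ of \eqref{subtorus} sits inside $SL(N+1,\mathbb{C})$ precisely as the diagonal matrices whose entries multiply to $1$, hence as a maximal torus; and $H=(\mathbb{C}^*)^{N+1}$ is the full diagonal torus of $GL(N+1,\mathbb{C})$, containing $T$ as the subtorus \eqref{subtorus}. Thus the pair $(H,T)$ is exactly the one for which Proposition \ref{weight polytope} was established, with the projection $\pi(x_1,\dots,x_{N+1})=(x_1-x_{N+1},\dots,x_N-x_{N+1})$ whose kernel is the diagonal line $\Set{(t,\dots,t)|t\in\mathbb{R}}$.

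Next I would confirm that $v$ is a nonzero vector of the $H$-representation on which $\mathrm{Wt}_H$ is defined. The twisting exponents $2n!$ and $(1-\beta)(n+1)!$ only rescale the $H$-weights of the two Chow-form factors: twisting the $SL(V)$-action by $\alpha$ replaces $\exp(u)$ by $\exp(\alpha u)$ on the Lie algebra, hence extends to $H$ and multiplies its weights by $\alpha$. This alters neither the nonvanishing of $v$ nor the formal hypothesis of Proposition \ref{weight polytope}, which is stated for an arbitrary nonzero vector. With this in place, applying Proposition \ref{weight polytope} to $v$ yields that $v$ is $T$-semistable if and only if $(t,\dots,t)\in \mathrm{Wt}_H(v)$ for some $t\in\mathbb{R}$, which is the asserted equivalence.

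Finally, the closing inclusion $\mathrm{Wt}_H(v)\subset \mathrm{Aff}_{\mathbb{R}}(\mathrm{Wt}_H(v))$ is automatic, since every subset of $\mathbb{R}^{N+1}$ lies in its own affine hull; it is recorded here only to fix the notation $\mathrm{Aff}_{\mathbb{R}}(\cdot)$ used in the explicit weight-polytope computations of Section 4. I do not anticipate a genuine obstacle: the whole content of the proposition is the translation of $T$-semistability into the diagonal-line condition on the $H$-weight polytope, which Proposition \ref{weight polytope} already carries out, and the only point to verify is that the present $(H,T)$ and the twisted vector $v$ fit that framework, which they do by construction. If anything is worth a moment's care, it is confirming that the twisted $SL(V)$-action genuinely extends to an $H$-action so that $\mathrm{Wt}_H(v)$ is well defined; this is immediate once one observes that the twisting acts on the diagonal Lie algebra of $H$.
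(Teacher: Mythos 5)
Your proposal is correct and matches the paper's own treatment: the paper gives no separate argument, simply stating that Proposition \ref{weight polytope} implies this proposition, i.e.\ it is the specialization of that result to the vector $R_X^{(2n!)}\otimes R_D^{((1-\beta)(n+1)!)}$ with the same pair $(H,T)$. Your additional checks (that the twisted action extends to $H$ and merely rescales weights, and that the affine-hull inclusion is automatic) are sound and consistent with Lemma \ref{decomposition}.
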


\begin{lemma}\label{decomposition}
\[
\mathrm{Wt}_H(R_X^{(2n!)}\otimes R_D^{((1-\beta)(n+1)!)}) = 2n! \mathrm{Wt}_H(R_X) + (1-\beta)(n+1)!\mathrm{Wt}_H(R_D).
\]
Here for any polytopes $P,Q \subset \mathbb{R}^{N+1}$ and for any $a, b \in \mathbb{R}$, 
we define the polytope $a P + b Q$ as $\Set{a x + b y | x \in P, y \in Q}$ which is called Minkowski summand.
\end{lemma}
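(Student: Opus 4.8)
The plan is to prove the identity by separating the two operations involved — forming a tensor product and applying the $\alpha$-twist — and handling each at the level of weight spaces. Concretely, I would first establish two auxiliary facts: (i) for any nonzero vectors $u,w$ lying in $H$-representations, $\mathrm{Wt}_H(u\otimes w)=\mathrm{Wt}_H(u)+\mathrm{Wt}_H(w)$ (Minkowski sum); and (ii) for $\alpha\ge 0$, $\mathrm{Wt}_H(v^{(\alpha)})=\alpha\,\mathrm{Wt}_H(v)$. Granting these, the lemma follows at once:
\[
\mathrm{Wt}_H(R_X^{(2n!)}\otimes R_D^{((1-\beta)(n+1)!)})=\mathrm{Wt}_H(R_X^{(2n!)})+\mathrm{Wt}_H(R_D^{((1-\beta)(n+1)!)})=2n!\,\mathrm{Wt}_H(R_X)+(1-\beta)(n+1)!\,\mathrm{Wt}_H(R_D),
\]
where the first equality is (i) applied to the two tensor factors and the second is (ii) applied to each factor (both coefficients being nonnegative since $\beta\in(0,1]$).

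For (ii) I would argue directly from the definition of the twisted action. Writing $h\in H$ near the identity as $h=\exp u$, the $\alpha$-twisted action replaces $h$ by $\exp(\alpha u)$; hence a vector of $H$-weight $\chi$ for the natural action acquires weight $\alpha\chi$ for the twisted action. Since the set of characters occurring with nonzero coefficient is unchanged and each is merely scaled by $\alpha$, passing to convex hulls gives $\mathrm{Wt}_H(v^{(\alpha)})=\alpha\,\mathrm{Wt}_H(v)$.

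The substance of the proof is (i). Write $A=\mathrm{Wt}_H(u)$ and $B=\mathrm{Wt}_H(w)$, and take weight decompositions $u=\sum_\chi u_\chi$, $w=\sum_\psi w_\psi$. The inclusion $\subseteq$ is routine: every summand $u_\chi\otimes w_\psi$ of $u\otimes w$ has $H$-weight $\chi+\psi$, so each weight of $u\otimes w$ lies in $A+B$, and taking convex hulls gives $\mathrm{Wt}_H(u\otimes w)\subseteq A+B$. The reverse inclusion is the main obstacle, because a priori the various $u_\chi\otimes w_\psi$ contributing to a fixed total weight could cancel. To rule this out it suffices to show that every vertex of $A+B$ is an actual weight of $u\otimes w$; indeed the two sets are then nested convex polytopes with $A+B$ equal to the convex hull of its vertices, forcing equality. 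Given a vertex $p$ of $A+B$, I would choose a linear functional $\ell$ attaining its maximum over $A+B$ uniquely at $p$. Since $\max_{A+B}\ell=\max_A\ell+\max_B\ell$ and the maximizer set of $\ell$ over $A+B$ is the Minkowski sum of the maximizer sets over $A$ and $B$, uniqueness of $p$ forces $\ell$ to be maximized at a unique vertex $\chi_0$ of $A$ and a unique vertex $\psi_0$ of $B$, with $p=\chi_0+\psi_0$. Thus the only pair $(\chi,\psi)$ of occurring weights with $\chi+\psi=p$ is $(\chi_0,\psi_0)$, so the $p$-weight component of $u\otimes w$ equals $u_{\chi_0}\otimes w_{\psi_0}\neq 0$, whence $p\in\mathrm{Wt}_H(u\otimes w)$. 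This completes the argument, modulo the elementary convex-geometry facts about maximizers of linear functionals on Minkowski sums, which I would record as a brief preliminary observation.
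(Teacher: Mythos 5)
Your proof is correct and follows the same two-step decomposition as the paper's: the tensor product yields the Minkowski sum of the weight polytopes, and the $\alpha$-twist rescales the weight polytope by $\alpha$. The only difference is that the paper simply cites \cite[Example 1.6]{KSZ} for the Minkowski-sum identity (your step (i)), whereas you supply a complete and correct self-contained proof of it via the vertex/linear-functional non-cancellation argument.
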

%Finally we define asymptotic log Chow semistability.

\begin{proof}
By \cite[Example 1.6]{KSZ}, 
\[
\mathrm{Wt}_H(R_X^{(2n!)}\otimes R_D^{((1-\beta)(n+1)!)}) = \mathrm{Wt}_H(R_X^{(2n!)}) + \mathrm{Wt}_H(R_D^{((1-\beta)(n+1)!)}).
\]
Further since $R_X^{(2n!)}$ has the $2n!$-twisted $SL(N+1,\mathbb{C})$-action, we can see 
\[\mathrm{Wt}_H(R_X^{(2n!)}) = 2n!\mathrm{Wt}_H(R_X)\] by definition of the weight polytope.
Similarly, $\mathrm{Wt}_H(R_D^{((1-\beta)(n+1)!)})=(1-\beta)(n+1)!\mathrm{Wt}_H(R_D)$.
\end{proof}
%%%%%%%%%%%%%%%%%%%%%%%%%%%%%%%%%%%%%%%%%%%%%%%%%%%%%%%%%%%%%%%
\section{Log Chow semistability for polarized toric mamifolds}
%In this section we investigate log Chow semistability of pairs consists of polarized toric manifolds and toric divisors.
\subsection{On varieties defined by finite sets}

First we consider projective varieties defined as follows.
Let $A:=\{a_0, \dots , a_N \} \subset \mathbb{Z}^n$ be a finite subset.
Suppose $A$ affinely generates $\mathbb{Z}^n \subset \mathbb{R}^n$ over $\mathbb{Z}$.
Then 
\begin{equation}\label{variety}
X_A := \text{closure of } \Set{ [x^{a_0} : \cdots : x^{a_{N}}]\in \mathbb{C}P^N | x \in (\mathbb{C}^*)^n }
\end{equation}
is an $n$-dimensional subvariety in $\mathbb{C}P^N$.

Again let $H:=(\mathbb{C}^*)^{N+1} \subset GL(N+1, \mathbb{C})$ be the torus of invertible diagonal matrix.

\begin{proposition}{\rm (\cite[Chapter 7, Proposition 1.11]{GKZ} and \cite{KSZ}.)}\label{GKZ}
When we set $P$ as the convex hull of $A$ in $\mathbb{Z}^n\otimes\mathbb{R}$, we have
\[
\mathrm{Aff}_{\mathbb{R}}(\mathrm{Wt}_H(R_{X_{A}})) = 
\Set{ \varphi : A \to \mathbb{R} | \sum_i \varphi(a_i) = (n+1)!\mathrm{Vol}(P), 
                                                   \sum_i \varphi(a_i)a_i = (n+1)!\int_P x d\nu }.
\]
Here $d\nu$ is the Euclidian volume form of $\mathbb{R}^n$ 
normalized as $\nu(\Delta_n)=1/n!$ for the $n$-dim standard simplex $\Delta_n$.
\end{proposition}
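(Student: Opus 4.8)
The plan is to prove the asserted equality by establishing the two inclusions separately. First note that the right-hand side is the affine subspace of $\{\varphi:A\to\mathbb{R}\}\cong\mathbb{R}^{N+1}$ cut out by the single equation $\sum_i\varphi(a_i)=(n+1)!\mathrm{Vol}(P)$ together with the $n$ equations packaged in $\sum_i\varphi(a_i)a_i=(n+1)!\int_P x\,d\nu$. Since $A$ affinely generates $\mathbb{Z}^n\subset\mathbb{R}^n$ over $\mathbb{Z}$, the constant function $a_i\mapsto 1$ and the $n$ coordinate functions $a_i\mapsto(a_i)_j$ are linearly independent on $A$, so these $n+1$ linear equations are independent and the subspace they define has dimension exactly $N-n$. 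It therefore suffices to show that $\mathrm{Wt}_H(R_{X_A})$ is contained in this subspace and is full-dimensional inside it.

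For the inclusion, I would first pin down the two linear relations satisfied by every character $\varphi$ with a nonzero component of $R_{X_A}$. The Chow form $R_{X_A}$ has multidegree $(d,\dots,d)$ in its $n+1$ factors, where $d=\deg X_A=n!\,\mathrm{Vol}(P)$ is the degree of the projective toric variety attached to $P$; hence each monomial has total degree $(n+1)d=(n+1)!\,\mathrm{Vol}(P)$, which is precisely $\sum_i\varphi(a_i)$, giving the first equation. For the second, observe that the big torus $(\mathbb{C}^*)^n$ acts on $\mathbb{C}P^N$ through $s\mapsto(s^{a_0},\dots,s^{a_N})\in H$ and preserves $X_A$ by the very construction \eqref{variety}; consequently $R_{X_A}$ is a common eigenvector for this action, so all of its monomials carry the same $(\mathbb{C}^*)^n$-weight. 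This weight equals $\sum_i\varphi(a_i)a_i$, which is therefore a constant $c\in\mathbb{R}^n$ on the whole weight polytope. This yields the second equation up to the value of $c$.

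To evaluate $c$ and to prepare the full-dimensionality claim, I would invoke the combinatorial description of the Chow polytope from \cite{KSZ} (and \cite[Chapter 7]{GKZ}): its vertices correspond to triangulations $\mathcal{T}$ of $P$ into lattice simplices with vertices in $A$, the vertex attached to $\mathcal{T}$ being $\varphi_{\mathcal{T}}(a)=\sum_{\sigma\in\mathcal{T},\,a\in\sigma}n!\,\mathrm{Vol}(\sigma)$. Evaluating the second relation on such a vertex and using that the centroid of a simplex is the average of its vertices, so that $\sum_{a\in\sigma}a=\tfrac{n+1}{\mathrm{Vol}(\sigma)}\int_\sigma x\,d\nu$, one computes
\[
\sum_i\varphi_{\mathcal{T}}(a_i)a_i=\sum_{\sigma\in\mathcal{T}}n!\,\mathrm{Vol}(\sigma)\sum_{a\in\sigma}a=(n+1)!\sum_{\sigma\in\mathcal{T}}\int_\sigma x\,d\nu=(n+1)!\int_P x\,d\nu,
\]
which identifies $c=(n+1)!\int_P x\,d\nu$ and completes the inclusion $\mathrm{Aff}_{\mathbb{R}}(\mathrm{Wt}_H(R_{X_A}))\subseteq$ the stated subspace.

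For the reverse inclusion I would appeal to the fact, again from \cite{GKZ, KSZ}, that the Chow polytope of $X_A$ has dimension equal to $N-n$; combined with the inclusion just established and the dimension count of the first paragraph, this forces $\mathrm{Aff}_{\mathbb{R}}(\mathrm{Wt}_H(R_{X_A}))$ to coincide with the entire subspace. The main obstacle is precisely this last step: one must know that the weight polytope is genuinely full-dimensional, i.e. that the triangulations of $(A,P)$ produce enough affinely independent vertices $\varphi_{\mathcal{T}}$ to span a subspace of dimension $N-n$ rather than a proper subpolytope of it. The cleanest route is to take the structural statement of \cite[Chapter 7, Proposition 1.11]{GKZ} as given and to reduce the proposition to the two linear computations above, so that the only genuinely new content to verify by hand is the barycenter identity pinning down the constant $c$.
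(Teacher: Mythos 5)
The paper offers no proof of this proposition at all: it is imported verbatim from \cite[Chapter 7, Proposition 1.11]{GKZ} and \cite{KSZ}, so there is no in-paper argument to compare against. Your reconstruction is correct and is essentially the standard derivation from those sources. The two linear relations are obtained exactly as you say: the multidegree $(d,\dots,d)$ of the Chow form with $d=\deg X_A=n!\,\mathrm{Vol}(P)$ (which uses the hypothesis that $A$ affinely generates $\mathbb{Z}^n$, so the monomial map is birational onto its image) gives the first equation, and the equivariance of $R_{X_A}$ under the subtorus $s\mapsto(s^{a_0},\dots,s^{a_N})$ of $H$ gives the second up to the constant $c$, which your barycenter computation on a vertex $\varphi_{\mathcal{T}}$ correctly pins down as $(n+1)!\int_P x\,d\nu$. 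Two small points worth tightening: the vertices of the Chow polytope correspond to the \emph{coherent} (regular) triangulations of $(A,P)$, not all triangulations --- this is harmless for your argument since at least one regular triangulation always exists and one vertex suffices to evaluate $c$; and the full-dimensionality you need in the last step is precisely the statement $\dim\Sigma(A)=\#A-n-1=N-n$ for the secondary polytope together with the identification of the Chow polytope of $X_A$ with $\Sigma(A)$, both of which are theorems in \cite{GKZ,KSZ} rather than things you verify. Given that you are allowed to quote those structural results (as the paper itself does), your proof is complete.
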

\begin{remark}
In above proposition, there is the canonical identification $\chi(H) \cong \{ \varphi : A \to \mathbb{Z}\}$.
\end{remark}

\subsection{Toric setting}

%It is well-known that the one to one correspondence between $n$-dimensional integral Delzant polytopes and $n$-dimensional compact toric manifolds with $(\mathbb{C}^*)^n$-equivariant very ample line bundles.
Let $P\subset \mathbb{R}^n$ be a $n$-dimensional integral Delzant polytope and let $(X_P, L_P)$ be the corresponding toric manifold and very ample line bundle.
Furthermore for any $i \in \mathbb{N}$, it is also well-known that the rescaled polytope $iP$ corresponds to $(X_P, L_P^i)$.
%Let $iP\cap\mathbb{Z}^n=\{a_0, \dots , a_N \}$ be the lattice points of $iP$. 
Then the image of $X_{P}$ of the Kodaira embedding by $L_P^i$ is given as
the form \eqref{variety} defined by the finite set $iP\cap\mathbb{Z}^n$.
%the closure of  $\Set{ [x^{a_0} : \cdots : x^{a_{N}}] | x \in (\mathbb{C}^*)^n }$ in $\mathbb{C}P^N$.
Hereafter we identify $(X_{P},L_P^i)$ with its Kodaira embedding image in $\mathbb{P}(H^0(X_P, L_P^i)^*)$.
%is an $n$-dimensional subvariety in $\mathbb{C}P^N$.
%Again let $H_{iP}:=(\mathbb{C}^*)^{N+1} \subset GL(N+1, \mathbb{C})$ be the torus of invertible diagonal matrix.

Now we investigate log Chow semistability of pairs consists of polarized toric manifolds and toric divisors.
Set $E_P(i) := \#\{iP\cap\mathbb{Z}^n\}$. 
By Proposition \ref{Hilbert-Mumford}, $T$-semistability of the Chow form is essential for any maximal torus $T\subset SL(E_P(i))$.
We take the following maximal torus as a special one:
\[
T_{iP}:=(\mathbb{C}^*)^{E_P(i)}\cap SL(E_P(i)).
\]
Here $(\mathbb{C}^*)^{E_P(i)} \subset GL(E_P(i))$ is the torus of invertible diagonal matrix and $T_{iP}$ is the subtorus given as \eqref{subtorus}.

Proposition \ref{GKZ} implies the following.
Set $\mathrm{Ch}(iP)$ as the weight polytope of the Chow form of $X_P \subset \mathbb{P}(H^0(X_P, L_P^i)^*)$ for $(\mathbb{C}^*)^{E_P(i)}$-action.

\begin{proposition}\label{chow form P} %{\rm (\cite[Chapter 7, Proposition 1.11]{GKZ} and \cite{KSZ})}
The affine hull of $\mathrm{Ch}(iP)$ in 
$\{\varphi : iP\cap\mathbb{Z}^n \to \mathbb{R}\} \cong \{\varphi : P\cap(\mathbb{Z}/i)^n \to \mathbb{R}\}$ is
\[
\Set{ \varphi : P\cap (\mathbb{Z}/i)^n \to \mathbb{R} | \sum_{\mathbf{a}\in P\cap (\mathbb{R}/i)^n} \varphi(\mathbf{a}) = (n+1)!\mathrm{Vol}(iP), 
                                                   \sum_{\mathbf{a}\in P\cap (\mathbb{R}/i)^n} \varphi(\mathbf{a})\mathbf{a} = (n+1)!\int_{iP} \mathbf{x} d\nu }.
\]
%Here $d\nu$ is the Euclidian volume form of $\mathbb{R}^n$.
\end{proposition}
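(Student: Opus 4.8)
The plan is to deduce Proposition \ref{chow form P} as a direct specialization of Proposition \ref{GKZ}, applied to the finite set $A := iP \cap \mathbb{Z}^n$. First I would invoke the identification recorded at the opening of this subsection: the image of $(X_P, L_P^i)$ under the Kodaira embedding into $\mathbb{P}(H^0(X_P, L_P^i)^*)$ is exactly the variety $X_A$ of the form \eqref{variety} attached to $A = iP\cap\mathbb{Z}^n$. Consequently $\mathrm{Ch}(iP) = \mathrm{Wt}_H(R_{X_A})$ with $H = (\mathbb{C}^*)^{E_P(i)}$, and the canonical identification $\chi(H)\cong\{\varphi: A\to\mathbb{Z}\}$ furnishes the ambient coordinate space $\{\varphi: iP\cap\mathbb{Z}^n\to\mathbb{R}\}$ in which the affine hull lives. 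So the statement is just Proposition \ref{GKZ} with its finite set taken to be $A$, once the hypotheses are checked.

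Next I would verify those hypotheses for $A = iP\cap\mathbb{Z}^n$. Since $P$ is integral and Delzant, the dilate $iP$ is again a lattice polytope whose vertices are lattice points, so the convex hull $\mathrm{conv}(iP\cap\mathbb{Z}^n)$ equals $iP$; moreover at any vertex of $iP$ the primitive edge vectors form a $\mathbb{Z}$-basis of $\mathbb{Z}^n$, whence $A$ affinely generates $\mathbb{Z}^n$ over $\mathbb{Z}$. With these in hand, Proposition \ref{GKZ}, applied with its polytope taken to be $iP$, yields
\[
\mathrm{Aff}_{\mathbb{R}}(\mathrm{Ch}(iP)) = \Set{ \varphi: iP\cap\mathbb{Z}^n\to\mathbb{R} | \sum_{b} \varphi(b) = (n+1)!\mathrm{Vol}(iP),\ \sum_{b}\varphi(b)\, b = (n+1)!\int_{iP} x\, d\nu }.
\]

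Finally I would rewrite this subspace in the intrinsic coordinates of $P$. The rescaling bijection $iP\cap\mathbb{Z}^n \to P\cap(\mathbb{Z}/i)^n$, $b\mapsto \mathbf{a} := b/i$, implements the asserted identification $\{\varphi: iP\cap\mathbb{Z}^n\to\mathbb{R}\}\cong\{\varphi: P\cap(\mathbb{Z}/i)^n\to\mathbb{R}\}$; substituting $b = i\mathbf{a}$ into the two moment equations and reindexing the sums over $\mathbf{a}\in P\cap(\mathbb{Z}/i)^n$ produces the two linear conditions displayed in the statement. I expect no genuine obstacle here: all the real content is already packaged in Proposition \ref{GKZ}, and the only points demanding care are the routine verification that $\mathrm{conv}(iP\cap\mathbb{Z}^n) = iP$ together with the consistent bookkeeping of the scaling factors attached to $\mathrm{Vol}(iP)$ and $\int_{iP} x\, d\nu$ under the substitution $b\mapsto b/i$.
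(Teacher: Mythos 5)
Your proposal is correct and follows exactly the route the paper takes: the paper offers no separate argument for this proposition, simply noting that it follows from Proposition \ref{GKZ} applied to the finite set $iP\cap\mathbb{Z}^n$ via the Kodaira embedding identification, which is precisely what you spell out (including the hypothesis checks and the rescaling $b\mapsto b/i$ that the paper leaves implicit).
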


Let $F\subset P$ be a facet.
Then $F\subset P$ defines the toric divisor $D_{F} \subset X_{P}$ which is invariant under $(\mathbb{C^*})^n$-action on $X_P$.
If we write $iP\cap\mathbb{Z}^n=\{a_0, \dots , a_N\}$, then the image of $D_F$ of the Kodaira embedding by $L_P^i$ is given as
\[
\Set{[u_{a_0}:\cdots :u_{a_N}] \in X_P | u_{a_k}\equiv 0 \text{ iff } a_k \notin iF\cap\mathbb{Z}^n} \subset \mathbb{P}(H^0(X_P, L_P^i)^*).
\]

There is a canonical Euclidian volume form $d\sigma$ on $\partial P$ %(and hence $\partial (iP)$) 
induced by $d\nu$ as follows:
On a facet $\{h_F=c_F\}\cap F$ of $F$, where $h_F$ is a primitive linear form, 
$dh_F \wedge d\sigma$ equals to the Euclidian volume form $d\nu$ on $\mathbb{R}^n$.

Therefore again Proposition \ref{GKZ} implies the following.
Set $\mathrm{Ch}(iF)$ as the weight polytope of the Chow form of $D_F \subset \mathbb{P}(H^0(X_P, L_P^i)^*)$ for $(\mathbb{C}^*)^{E_P(i)}$-action.

\begin{proposition}\label{chow form F}
The affine hull of $\mathrm{Ch}(iF)$ in 
$\{\varphi : P\cap(\mathbb{Z}/i)^n \to \mathbb{R}\}$ is
\[
\Set{ \varphi : P\cap (\mathbb{Z}/i)^n \to \mathbb{R} | \begin{array}{l} 
                                                                                     \displaystyle  \varphi \equiv 0 \text{ on } (P\setminus F)\cap(\mathbb{Z}/i)^n,\\
                                                                                      \displaystyle \sum_{\mathbf{a}\in P\cap (\mathbb{Z}/i)^n} \varphi(\mathbf{a}) = n!\mathrm{Vol}(iF),\\ 
                                                                                     \displaystyle \sum_{\mathbf{a}\in P\cap (\mathbb{Z}/i)^n} \varphi(\mathbf{a})\mathbf{a} = n!\int_{iF} \mathbf{x} d\sigma .
                                                                                       \end{array}
                                                                                      }.
\]
\end{proposition}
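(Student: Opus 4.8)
The plan is to reduce the computation to Proposition~\ref{GKZ} applied to the toric variety underlying $D_F$ itself, together with a careful matching of volume normalizations. First I would identify $D_F$ with the $(n-1)$-dimensional toric variety $X_{iF\cap\mathbb{Z}^n}$ associated to the finite set $iF\cap\mathbb{Z}^n$: the polytope of the pair $(D_F, L_P^i|_{D_F})$ is $iF$, so its Kodaira image is exactly of the form \eqref{variety}, but sitting inside the coordinate subspace of $\mathbb{P}(H^0(X_P,L_P^i)^*)$ spanned by the coordinates indexed by $iF\cap\mathbb{Z}^n$. Since every coordinate $u_{a_k}$ with $a_k\notin iF$ vanishes identically on $D_F$, the Chow form $R_{D_F}$ involves only these coordinates; consequently all characters occurring in $R_{D_F}$ are supported on $iF\cap\mathbb{Z}^n$, which yields the constraint $\varphi\equiv 0$ on $(P\setminus F)\cap(\mathbb{Z}/i)^n$ and shows that $\mathrm{Ch}(iF)$ lies in this coordinate subspace.

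Next I would apply Proposition~\ref{GKZ} to $X_{iF\cap\mathbb{Z}^n}$ with $n$ replaced by $n-1$. Here the relevant lattice is the rank $(n-1)$ affine lattice $\mathbb{Z}^n\cap\mathrm{Aff}_{\mathbb{R}}(iF)$; the Delzant hypothesis on $P$ guarantees that $iF\cap\mathbb{Z}^n$ affinely generates it, so the proposition applies. Because $((n-1)+1)!=n!$, this produces exactly the two moment constraints
\[
\sum_{\mathbf{a}}\varphi(\mathbf{a}) = n!\,\mathrm{Vol}'(iF),\qquad \sum_{\mathbf{a}}\varphi(\mathbf{a})\mathbf{a} = n!\int_{iF}\mathbf{x}\,d\nu',
\]
where $\mathrm{Vol}'$ and $d\nu'$ denote the intrinsic Euclidean volume data on the hyperplane $\mathrm{Aff}_{\mathbb{R}}(iF)$, normalized so that the standard $(n-1)$-simplex of the lattice has volume $1/(n-1)!$. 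The moment $\sum\varphi(\mathbf{a})\mathbf{a}$ is still computed using the ambient coordinates $\mathbf{a}\in\mathbb{R}^n$, but since the affine hyperplane includes naturally into $\mathbb{R}^n$ this causes no discrepancy. Intersecting the support condition of the previous step with these two equalities gives the claimed affine hull.

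The remaining---and principal---step is to verify that the intrinsic volume form $d\nu'$ coincides with the facet measure $d\sigma$. This is where the normalization $dh_F\wedge d\sigma=d\nu$ enters. Since $h_F$ is a primitive linear form, one can choose a basis $e_1,\dots,e_{n-1}$ of the linear lattice $\mathbb{Z}^n\cap\{h_F=0\}$ together with a vector $e_n\in\mathbb{Z}^n$ satisfying $h_F(e_n)=1$, so that $e_1,\dots,e_n$ form a $\mathbb{Z}$-basis of $\mathbb{Z}^n$. Evaluating $dh_F\wedge d\sigma=d\nu$ on $(e_1,\dots,e_n)$ and using $dh_F(e_i)=0$ for $i<n$ together with $d\nu(e_1,\dots,e_n)=\pm 1$, I obtain $d\sigma(e_1,\dots,e_{n-1})=\pm 1$; hence $d\sigma$ assigns volume $1/(n-1)!$ to the standard $(n-1)$-simplex of the facet lattice, which is precisely the normalization of $d\nu'$. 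Therefore $d\nu'=d\sigma$ and $\mathrm{Vol}'(iF)=\int_{iF}d\sigma$, converting the two constraints above into the asserted equalities.

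I expect the main obstacle to be exactly this last normalization check: one must confirm that the measure $d\sigma$ defined extrinsically on $\partial P$ via $dh_F\wedge d\sigma=d\nu$ agrees with the intrinsic lattice-normalized volume form demanded by Proposition~\ref{GKZ}, and in particular that the primitivity of $h_F$ is what makes the two normalizations coincide. A secondary point requiring care is the justification that $iF\cap\mathbb{Z}^n$ affinely generates the affine lattice of its spanning hyperplane, which is needed to legitimately invoke Proposition~\ref{GKZ}, and which follows from the smoothness (Delzant) of the vertices of $F$.
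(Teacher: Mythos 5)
Your proposal is correct and follows essentially the same route as the paper, which simply invokes Proposition \ref{GKZ} for the $(n-1)$-dimensional variety $D_F$ after defining $d\sigma$ by $dh_F\wedge d\sigma=d\nu$. The normalization check you carry out (primitivity of $h_F$ forcing $d\sigma$ to agree with the lattice-normalized intrinsic volume on $\mathrm{Aff}_{\mathbb{R}}(F)$) is exactly the point the paper leaves implicit, and your verification of it is sound.
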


Summing up with Proposition \ref{T-semistability}, Lemma \ref{decomposition}, Proposition \ref{chow form P} and Proposition \ref{chow form F}, we have the following.

\begin{proposition}
The Chow form of $(X_P, D_F, L_P^i, \beta)$
%, that is $R_{X_P}^{(2n)}\otimes R_{D_F}^{(1-\beta)(n+1)}$ defined by the Kodaira embedding by $L_P^i$, 
is $T_{iP}$-semistable 
if and only if 
\begin{equation}
\label{iff}
\frac{n!(n+1)!}{E_P(i)}\Bigl(2\Vol(iP)+(1-\beta)\Vol(iF)\Bigr)d_{iP} 
%\in 2n \mathrm{Wt}_{(\mathbb{C}^*)^{E_P(i)}}(R_X) + (1-\beta)(n+1)\mathrm{Wt}_{(\mathbb{C}^*)^{E_P(i)}}(R_D),
\in 2n \mathrm{Ch}(iP)+ (1-\beta)(n+1)\mathrm{Ch}(iF),
\end{equation}
where $d_{iP}(\mathbf{a}) = 1$ for all $\mathbf{a}\in P\cap(\mathbb{Z}/i)^n$. 
\end{proposition}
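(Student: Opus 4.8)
The plan is to obtain the statement as the final assembly of Proposition \ref{T-semistability}, Lemma \ref{decomposition}, Proposition \ref{chow form P} and Proposition \ref{chow form F}, the one genuinely new step being the elimination of the existential quantifier on $t$. First I would specialize the necessary condition of Proposition \ref{T-semistability} to the present toric data: taking $X=X_P$ and $D=D_F$ in their Kodaira images under $L_P^i$ inside $\mathbb{P}(H^0(X_P,L_P^i)^*)$, whose diagonal coordinate torus is $H=(\mathbb{C}^*)^{E_P(i)}$ and whose relevant maximal torus is the subtorus $T_{iP}$ of the form \eqref{subtorus}, Proposition \ref{T-semistability} reduces $T_{iP}$-semistability of the Chow form of $(X_P,D_F,L_P^i,\beta)$ to the existence of $t\in\mathbb{R}$ with $t\,d_{iP}=(t,\dots,t)\in\mathrm{Wt}_H(R_{X_P}^{(2n!)}\otimes R_{D_F}^{((1-\beta)(n+1)!)})$. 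By Lemma \ref{decomposition}, together with the identifications $\mathrm{Wt}_H(R_{X_P})=\mathrm{Ch}(iP)$ and $\mathrm{Wt}_H(R_{D_F})=\mathrm{Ch}(iF)$, this weight polytope is the Minkowski sum $2n!\,\mathrm{Ch}(iP)+(1-\beta)(n+1)!\,\mathrm{Ch}(iF)$.

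The next step is to determine the unique value of $t$ that can occur, which is where the affine-hull descriptions enter. I would introduce the linear functional $\ell(\varphi):=\sum_{\mathbf{a}\in P\cap(\mathbb{Z}/i)^n}\varphi(\mathbf{a})$. Since $\mathrm{Ch}(iP)$ is contained in its own affine hull, Proposition \ref{chow form P} gives $\ell\equiv(n+1)!\,\Vol(iP)$ on $\mathrm{Ch}(iP)$, and likewise Proposition \ref{chow form F} gives $\ell\equiv n!\,\Vol(iF)$ on $\mathrm{Ch}(iF)$. Hence $\ell$ is constant on the whole Minkowski sum, with value
\[
2n!\,(n+1)!\,\Vol(iP)+(1-\beta)(n+1)!\,n!\,\Vol(iF)=n!(n+1)!\bigl(2\Vol(iP)+(1-\beta)\Vol(iF)\bigr).
\]
On the other hand $\ell(t\,d_{iP})=E_P(i)\,t$, so the diagonal line $\{t\,d_{iP}\}$ can meet the Minkowski sum only at the single point given by
\[
t=t_0:=\frac{n!(n+1)!}{E_P(i)}\bigl(2\Vol(iP)+(1-\beta)\Vol(iF)\bigr),
\]
no other value of $t$ being admissible.

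Putting the two steps together, the existential condition ``there is $t$ with $t\,d_{iP}$ in the Minkowski sum'' collapses to the single membership $t_0\,d_{iP}\in 2n!\,\mathrm{Ch}(iP)+(1-\beta)(n+1)!\,\mathrm{Ch}(iF)$, which is the containment \eqref{iff}; combined with the first paragraph this yields the claimed equivalence. I expect the only real obstacle to be this quantifier collapse, namely recognizing that the coordinate-sum constraints of Propositions \ref{chow form P} and \ref{chow form F} force $t$ to the single value $t_0$ appearing on the left-hand side of \eqref{iff} (so that in particular the scalar multiplying $d_{iP}$ is not a free parameter but is dictated by the volumes). Everything else is a direct invocation of the four preceding results, so no new estimate or geometric construction is required.
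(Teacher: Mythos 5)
Your proposal is correct and is precisely the assembly the paper intends: the paper offers no written proof beyond ``summing up'' the four cited results, and your quantifier elimination via the linear functional $\ell(\varphi)=\sum_{\mathbf{a}}\varphi(\mathbf{a})$, which is constant on each affine hull and hence pins $t$ to the value $t_0=\frac{n!(n+1)!}{E_P(i)}\bigl(2\Vol(iP)+(1-\beta)\Vol(iF)\bigr)$, is exactly the missing step. Note only that your argument (correctly) yields membership in $2n!\,\mathrm{Ch}(iP)+(1-\beta)(n+1)!\,\mathrm{Ch}(iF)$, whereas the displayed condition \eqref{iff} omits the factorials in $2n$ and $(n+1)$; this is a typo in the paper, as its own proof of Theorem \ref{main} uses the factorial version.
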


To state the main theorem, 
we introduce the notion of the set consists on concave piecewise linear functions on $P$ induced from every $\varphi : P\cap (\mathbb{Z}/i)^n \to \mathbb{R}$.
For a  fixed such $\varphi$, when we set
\[
G_{\varphi} = \text{the convex hull of } \bigcap_{\mathbf{a}\in P \cap (\mathbb{Z}/i)^n} \Set{(\mathbf{a}, t) | t\leq \varphi(a)} \subset \mathbb{R}^n \times \mathbb{R},
\]
and define a piecewise linear function $g_{\varphi} : P \to \mathbb{R}$ by
\[
g_{\varphi}(\mathbf{x}) = \max\Set{t | (\mathbf{x},t) \in G_{\varphi}}.
\]

Here we set 
\[
\mathrm{PL}(P,i) = \Set{g_{\varphi} | \varphi : P\cap(\mathbb{Z}/i)^n \to \mathbb{R}}.
\]
\begin{remark}\label{Q}
For each $\varphi : P\cap(\mathbb{Z}/i)^n \to \mathbb{Q}$, the same construction above defines the $g_{\varphi} : P \to \mathbb{Q}$.
We also set
\[
\mathrm{PL}_{\mathbb{Q}}(P,i) = \Set{g_{\varphi} | \varphi : P\cap(\mathbb{Z}/i)^n \to \mathbb{Q}}.
\]
\end{remark}

When we further set 
\[\langle \varphi, \psi \rangle = \sum_{\mathbf{a}\in P\cap(\mathbb{Z}/i)^n} \varphi(\mathbf{a})\psi(\mathbf{a})
\]
 for every $\varphi, \psi : P\cap (\mathbb{Z}/i)^n \to \mathbb{R}$ as a scalar product,
then $g_{\varphi}$ has the following properties.

\begin{lemma}{\rm (\cite[Chapter 7, Lemma 1.9]{GKZ})}
\label{g}
For any $\varphi : P\cap(\mathbb{Z}/i)^n \to \mathbb{R}$,
\begin{enumerate}
\item the induced function $g_{\varphi}$ is concave.
\item we have the equality 
         \[
         \max\Set{\langle \varphi, \psi \rangle | \psi \in \mathrm{Ch}(iP)} = i^n(n+1)!\int_P g_{\varphi} d\nu.
         \]
         \end{enumerate}
\end{lemma}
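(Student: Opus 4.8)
The plan is to treat the two assertions separately: (1) falls out of the construction of $G_\varphi$, while (2) rests on identifying $\mathrm{Ch}(iP)$ with the secondary polytope of $A:=iP\cap\mathbb{Z}^n$ and then running a support-function argument driven by concavity. For (1) I would argue purely from convexity, reading the defining relation as the convex hull of the union of the downward rays $\{(\mathbf{a},t)\mid t\le\varphi(\mathbf{a})\}$ over $\mathbf{a}\in P\cap(\mathbb{Z}/i)^n$. Then $G_\varphi$ is convex and $g_\varphi$ is its upper boundary over $P$. If $(\mathbf{x}_j,g_\varphi(\mathbf{x}_j))\in G_\varphi$ for $j=1,2$, convexity places the whole segment between them in $G_\varphi$, so $g_\varphi(\lambda\mathbf{x}_1+(1-\lambda)\mathbf{x}_2)\ge \lambda g_\varphi(\mathbf{x}_1)+(1-\lambda)g_\varphi(\mathbf{x}_2)$; since $P=\mathrm{conv}\bigl(P\cap(\mathbb{Z}/i)^n\bigr)$ this $g_\varphi$ is finite and piecewise linear on all of $P$. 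This gives concavity with no computation.

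For (2), the functional $\psi\mapsto\langle\varphi,\psi\rangle$ is linear, so its maximum over the polytope $\mathrm{Ch}(iP)$ is attained at a vertex. The deep input I would invoke is the theorem of Kapranov--Sturmfels--Zelevinsky (\cite{KSZ}, cf. \cite{GKZ}) identifying $\mathrm{Ch}(iP)$ with the secondary polytope of $A$, whose vertices are the GKZ vectors $\phi_T(\mathbf{a})=\sum_{\sigma\ni\mathbf{a}}\mathrm{vol}(\sigma)$ indexed by the regular triangulations $T$ of $A$. Comparing the total mass $\sum_\mathbf{a}\phi_T(\mathbf{a})=(n+1)\,\mathrm{vol}(iP)$ with the first constraint $\sum_\mathbf{a}\varphi(\mathbf{a})=(n+1)!\,\mathrm{Vol}(iP)$ in Proposition~\ref{chow form P} fixes the normalization $\mathrm{vol}(\sigma)=n!\,\lambda(\sigma)$, where $\lambda$ is Lebesgue measure; note that under the stated normalization $\nu(\Delta_n)=1/n!$ one has $d\nu=\lambda$, so all volumes are directly comparable.

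The heart is then a single inequality. For an arbitrary triangulation $T$ and a simplex $\sigma\in T$, let $\ell_\sigma$ be the affine interpolant of the values of $\varphi$ at the vertices of $\sigma$. Because $g_\varphi$ is concave and $g_\varphi\ge\varphi$ at every lattice point, a concave function dominating $\ell_\sigma$ at the vertices of $\sigma$ dominates it on all of $\sigma$, whence $\int_\sigma g_\varphi\,d\nu\ge\int_\sigma\ell_\sigma\,d\nu=\tfrac{1}{n+1}\lambda(\sigma)\sum_{\mathbf{v}\in\sigma}\varphi(\mathbf{v})$, the last equality being the centroid formula for an affine function on a simplex. Summing over $T$ and accounting for the homothety $iP\to P$ (which contributes the factor $i^n$) yields $\langle\varphi,\phi_T\rangle\le i^n(n+1)!\int_P g_\varphi\,d\nu$ for every $T$, hence the same bound for the maximum over $\mathrm{Ch}(iP)$. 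For the reverse I would take $T=T_\varphi$, the regular triangulation whose maximal cells are the domains of linearity of $g_\varphi$ (refining to a triangulation if some cell is not a simplex): on each such cell $g_\varphi$ coincides with $\ell_\sigma$, since $g_\varphi=\varphi$ exactly at the lattice points lying on the upper boundary of $G_\varphi$, so the displayed estimate becomes an equality cell by cell, and $\phi_{T_\varphi}$ realizes the maximum.

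The main obstacle is not the inequality itself but the structural input underneath it: that $\mathrm{Ch}(iP)$ is precisely the secondary polytope with the stated vertex description, and that the subdivision produced by the $G_\varphi$ construction is exactly the regular triangulation computing the support function in the direction $\varphi$. Granting the KSZ identification, which is already cited in the excerpt, the remaining work is the careful matching of the two normalization constants — the $i^n$ coming from the rescaling $iP\to P$ and the $(n+1)!$ arising from the volume normalization together with the centroid formula — against the affine-hull constraints of Proposition~\ref{chow form P}; I expect this bookkeeping, rather than any conceptual step, to be where errors are easiest to make.
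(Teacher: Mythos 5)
Your argument is correct, and it is essentially the proof the paper is implicitly relying on: the paper does not prove Lemma \ref{g} at all but cites \cite[Chapter 7, Lemma 1.9]{GKZ}, whose content is exactly your support-function computation on the secondary polytope (convexity of $G_{\varphi}$ for concavity of $g_{\varphi}$; GKZ vectors $\phi_T$, the centroid formula on each simplex, domination of the interpolant by the concave envelope, and equality on a triangulation refining the regular subdivision induced by $\varphi$), combined with the Kapranov--Sturmfels--Zelevinsky identification $\mathrm{Ch}(iP)=\Sigma(iP\cap\mathbb{Z}^n)$ that the paper also invokes via \cite{KSZ}. Your normalization bookkeeping (the factor $i^{n}$ from the homothety $iP\to P$ and $(n+1)!$ from $\mathrm{vol}=n!\lambda$ together with the centroid formula) matches the constraints of Proposition \ref{chow form P}, and you correctly read the $\bigcap$ in the definition of $G_{\varphi}$ as the intended union; the only point worth noting is that the refining triangulation need not itself be regular, but since every $\phi_T$ lies in the secondary polytope this does not affect the equality.
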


Now we are ready to prove Theorem \ref{main}.

%The following is the main theorem of this paper.

%\begin{theorem}\label{main}
%The Chow form of $(X_P, D_F, L_P^i, \beta)$
%is $T_{iP}$-semistable if and only if  
%\[ 
%E_P(i)  \biggl( 2i \int_P g d\nu + (1-\beta)\int_F g d\sigma \biggr)
%\geq \biggl( 2i \mathrm{Vol}(P) + (1-\beta)\mathrm{Vol}(F) \biggr) \sum_{a \in P \cap (\mathbb{Z}/i)^n} g(a)
%\]
%for all $g \in \mathrm{PL}(P,i)$.
%\end{theorem}

\begin{proof}
Since $2n! \mathrm{Ch}(iP)+ (1-\beta)(n+1)!\mathrm{Ch}(iF)$ is convex,
we can see that the condition \eqref{iff} holds if and only if 
\begin{equation*}
\begin{split}
&\max\Set{\langle \varphi, \psi \rangle | \psi \in 2n! \mathrm{Ch}(iP)+ (1-\beta)(n+1)!\mathrm{Ch}(iF)}\\
&\geq 
\frac{n!(n+1)!}{E_P(i)}\Bigl(2\Vol(iP)+(1-\beta)\Vol(iF)\Bigr) \langle \varphi , d_{iP} \rangle
\end{split} 
\end{equation*}
for all $\varphi : P\cap(\mathbb{Z}/i)^n \to \mathbb{R}$.
By the definition of Minkovski sum and Lemma \ref{g}, we have
\begin{equation*}
\begin{split}
&\max\Set{\langle \varphi, \psi \rangle | \psi \in 2n! \mathrm{Ch}(iP)+ (1-\beta)(n+1)!\mathrm{Ch}(iF)}\\
&= 2n! \max\Set{\langle \varphi, \psi \rangle | \psi \in \mathrm{Ch}(iP)}
       +(1-\beta)(n+1)! \max\Set{\langle \varphi, \psi \rangle | \psi \in \mathrm{Ch}(iF)}\\
&= i^{n-1}n!(n+1)!\Bigl(2i\int_P g_{\varphi}d\nu + (1-\beta)\int_F g_{\varphi}d\sigma \Bigr).
\end{split}
\end{equation*}
On the other hand, by the definition of $g_{\varphi}$, we have
\begin{equation*}
\begin{split}
&\frac{n!(n+1)!}{E_P(i)}\Bigl(2\Vol(iP)+(1-\beta)\Vol(iF)\Bigr) \langle \varphi , d_{iP} \rangle \\
&=\frac{i^{n-1}n!(n+1)!}{E_P(i)}\Bigl(2i\Vol(P)+(1-\beta)\Vol(F)\Bigr) \sum_{\mathbf{a}\in P\cap(\mathbb{Z}/i)^n} g_{\varphi}(\mathbf{a}).
\end{split}
\end{equation*}
Therefore we have the desired result.
\end{proof}

Since the $T_{iP}$-semistablity is a necessary condition for the asymptotic log Chow semistability, 
%when we take $g\in \mathrm{PL}(P,i)$ as plus and minus coordinate functions in above theorem, we obtain the following.
applying the above theorem to linear functions, we have the following.

\begin{corollary}\label{cor}
If $(X_P, D_F, L_P, \beta)$ is asymptotically log Chow semistable then
\begin{equation}\label{obstruction}
E_P(i)  \biggl( 2i \int_P \mathbf{x} d\nu + (1-\beta)\int_F \mathbf{x} d\sigma \biggr)
= \biggl( 2i \mathrm{Vol}(P) + (1-\beta)\mathrm{Vol}(F) \biggr) \sum_{\mathbf{a} \in P \cap (\mathbb{Z}/i)^n} \mathbf{a}
\end{equation}
holds for all $i\gg 0$.
\end{corollary}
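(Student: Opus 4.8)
The plan is to read off \eqref{obstruction} from Theorem~\ref{main} by feeding the semistability inequality both a linear function and its negative. First I would record why $T_{iP}$-semistability is forced: for each $i$ the torus $T_{iP}$ is a maximal torus of $SL(E_P(i))$, so by the Hilbert--Mumford criterion (Proposition~\ref{Hilbert-Mumford}) the $SL(E_P(i))$-semistability defining log Chow semistability of $(X_P,D_F,L_P^i,\beta)$ entails $T_{iP}$-semistability of the associated tensor Chow form. Hence if $(X_P,D_F,L_P,\beta)$ is asymptotically log Chow semistable, the Chow form of $(X_P,D_F,L_P^i,\beta)$ is $T_{iP}$-semistable for all $i\gg 0$, and Theorem~\ref{main} is available for each such $i$.

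The crucial observation is that linear functions are legitimate test functions in $\mathrm{PL}(P,i)$. Given a linear $\ell:P\to\mathbb{R}$, put $\varphi=\ell|_{P\cap(\mathbb{Z}/i)^n}$. Since $P$ is the convex hull of its lattice points and $\ell$ is affine, every point $(\mathbf{x},t)$ of $G_{\varphi}$ satisfies $t\le\ell(\mathbf{x})$, while $(\mathbf{x},\ell(\mathbf{x}))\in G_{\varphi}$ for every $\mathbf{x}\in P$; therefore $g_{\varphi}=\ell$ on $P$ and $\ell\in\mathrm{PL}(P,i)$. Applying the same to $-\ell$ gives $-\ell\in\mathrm{PL}(P,i)$ as well, so for all $i\gg 0$ the inequality of Theorem~\ref{main} holds for both $g=\ell$ and $g=-\ell$.

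Because the inequality for $g=-\ell$ is, after multiplication by $-1$, exactly the reverse of the inequality for $g=\ell$, the two together collapse to the equality
\[
E_P(i)\biggl(2i\int_P \ell\, d\nu+(1-\beta)\int_F \ell\, d\sigma\biggr)
=\Bigl(2i\,\mathrm{Vol}(P)+(1-\beta)\mathrm{Vol}(F)\Bigr)\sum_{\mathbf{a}\in P\cap(\mathbb{Z}/i)^n}\ell(\mathbf{a})
\]
valid for every linear $\ell$ and all $i\gg 0$. I would then take $\ell$ to be each coordinate function $x_1,\dots,x_n$ and assemble the $n$ scalar identities into the single $\mathbb{R}^n$-valued identity \eqref{obstruction}, using that $\int$, $\sum$ and the left-hand scalar factors act componentwise.

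I expect the only genuine point to check is that $g_{\varphi}=\ell$ when $\varphi$ is the restriction of a linear $\ell$, i.e.\ that the upper-hull construction defining $\mathrm{PL}(P,i)$ leaves an already-affine function unchanged. This reduces to the elementary fact that a convex combination of heights beneath the graph of an affine function stays beneath that graph, together with the integrality of $P$ ensuring $P$ is the convex hull of $P\cap(\mathbb{Z}/i)^n$; once this is in hand the remainder is the purely formal manipulation above.
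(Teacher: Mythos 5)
Your proposal is correct and follows exactly the paper's (very terse) argument: the paper simply notes that $T_{iP}$-semistability is a necessary condition for asymptotic log Chow semistability and then ``applies the theorem to linear functions,'' which is precisely your device of testing Theorem~\ref{main} against both $\ell$ and $-\ell$ to upgrade the inequality to an equality. Your additional verification that $g_{\varphi}=\ell$ for the restriction of a linear function is a detail the paper leaves implicit, but it does not change the route.
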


\begin{remark}\label{extension}
(1) The reader should bear in mind that $E_P(i)$ (resp. $\sum_{a\in P\cap (Z/i)^n}a$) is a polynomial in $i$
(resp. $\mathbb{R}^n$-valued polynomial). Hence the identity theorem shows that \eqref{obstruction} must hold for every integer $i$
(cf. \cite[Theorem 1.4]{Ono1}).

% Since $E_P(i)$ and $\sum_{a \in P \cap (\mathbb{Z}/i)^n} a$ are polynomials in $i$,
%the above equality must hold for every integer $i$ when it holds for all $i\gg 0$.

(2) We can extend Theorem \ref{main} and Corollary \ref{cor} for any formal sum of different toric divisors $\sum_t (1-\beta_t)D_{F_t}$.
For instance, in the case of Corollary \ref{cor}, the equality (\ref{obstruction}) is replaced by
\[
E_P(i)  \biggl( 2i \int_P \mathbf{x} d\nu + \sum_t(1-\beta_t)\int_{F_t} \mathbf{x} d\sigma \biggr)
= \biggl( 2i \mathrm{Vol}(P) + \sum_t (1-\beta_t)\mathrm{Vol}(F_t) \biggr) \sum_{\mathbf{a} \in P \cap (\mathbb{Z}/i)^n} \mathbf{a}.
\]
\end{remark}

\begin{remark}
For $\beta=1$, Theorem \ref{main} and Corollary \ref{cor} are Ono's results in \cite{Ono1, Ono2}.
\end{remark}
%%%%%%%%%%%%%%%%%%%%%%%%%%%%%%%%%%%%%%%%%%%%%%%%%%%%%%%%%%%%%%%%
%\section{A necessary condition for log Chow semistability of polarized toric manifolds}
\section{Relation to the log K-semistability}
As an application of Theorem \ref{main} we show that 
the asymptotic log Chow semistability for polarized toric manifolds and its toric divisors  
implies log K-semistability for toric degenerations.

Let us first recall the definition of the test configuration, the log Futaki invariant and the log K-semistability (See for example \cite{L} for more detail).
Let $(X,L)$ be a polarized manifold. %and its divisor and let $\beta \in [0,1]$ be a cone angle.
\begin{definition}
A {\it test configuration} $(\mathcal{X},  \mathcal{L})$ for $(X,L)$ consists of
a $\mathbb{C}^*$-equivariant flat family $\mathcal{X} \to \mathbb{C}$ (where $\mathbb{C}^*$ acts on $\mathbb{C}$ by multiplication)
%a $\mathbb{C}^*$-invariant flat family $\mathcal{D} \to \mathbb{C}$ (where the $\mathbb{C}^*$ action is induced from that on $\mathcal{X}$) 
and a $\mathbb{C}^*$-equivariant ample line bundle $\mathcal{L}$ over $\mathcal{X}$. 
In addition we require the fibers $(\mathcal{X}_t,  \mathcal{L}|_{\mathcal{X}_t})$ over $t \neq 0$ is isomorphic to $(X,L)$. 
\end{definition}

%\begin{remark}
%When $D$ is empty, then $\mathcal{D}$ is also empty.
%In this case, $(\mathcal{X}, \mathcal{L})$ is a test configuration for $(X,L)$ defined originally in \cite{D2}.
%\end{remark}

Note that any test configuration is equivariantly embedded into $\mathbb{C}P^N \times \mathbb{C}$.
Here the $\mathbb{C}^*$-action on $\mathbb{C}P^N$ is given by a one parameter subgroup of $SL(N+1,\mathbb{C})$.
If  $D\subset X$ is any irreducible divisor, the one parameter subgroup of $SL(N+1,\mathbb{C})$ associated to any test configuration of $(X,L)$ 
also induces a test configuration $(\mathcal{D},\mathcal{L}|_{\mathcal{D}})$ of $(D,L|_{D})$.

To define the log Futaki invariant, fix a test configuration $(\mathcal{X}, \mathcal{L})$ for $(X,L)$ and a irreducible divisor $D\subset X$.
Let $d_k$ and $\tilde{d}_k$ be dimensions of $H^0(\mathcal{X}_0, \mathcal{L}^k|_{\mathcal{X}_0})$ 
and $H^0(\mathcal{D}_0, \mathcal{L}^k|_{\mathcal{D}_0})$ respectively and
let $w_k$ and $\tilde{w}_k$ be total weights of $\mathbb{C}^*$ actions on these cohomologies respectively.
For $k\gg 0$, we then have asymptotic expansions (set $n = \dim X$)
\begin{eqnarray*}
d_k &=& a_0k^n + a_1k^{n-1} + \mathcal{O}(k^{n-2}), \quad \tilde{d}_k= \tilde{a}_0k^{n-1} + \mathcal{O}(k^{n-2}),\\
w_k &=& b_0k^{n+1} + b_1k^n + \mathcal{O}(k^{n-1}) \quad\text{and}\quad  \tilde{w}_k = \tilde{b}_0k^n + \mathcal{O}(k^{n-1}).
\end{eqnarray*}

\begin{definition}
The {\it log Futaki invariant} for $(\mathcal{X}, \mathcal{D}, \mathcal{L})$ and $\beta \in (0,1]$ is defined as
\[
\mathrm{Fut}(\mathcal{X}, \mathcal{D}, \mathcal{L}, \beta) = 2\Bigl(\frac{a_1}{a_0}b_0-b_1\Bigr) + (1-\beta)\Bigl(\tilde{b}_0-\frac{\tilde{a}_0}{a_0}b_0\Bigr).
\]
\end{definition}

\begin{definition}
$(X,D,L,\beta)$ is {\it log K-semistable} if 
%\[
$
\mathrm{Fut}(\mathcal{X}, \mathcal{D}, \mathcal{L}, \beta) \leq 0
$
%\]
for every test configuration $(\mathcal{X},  \mathcal{L})$ for $(X,L)$.
\end{definition}

For every toric variety $(X_P, L_P)$, Donaldson \cite{D2} showed that every rational convex piecewise linear function
$h : P \to \mathbb{R}$ induced a test configuration for $(X_P, L_P)$ with 
\begin{eqnarray*}
d_k &=&  k^n\mathrm{Vol}(P) + \frac{k^{n-1}}{2}\mathrm{Vol}(\partial P) + \mathcal{O}(k^{n-2}),\\
w_k &=& k^{n+1}\int_P (R-h)d\nu  + \frac{k^n}{2} \int_{\partial P} (R-h)d\sigma + \mathcal{O}(k^{n-1})
\end{eqnarray*}
where $R$ is an integer such that $h\leq R$.
For any toric divisor $D_F \subset X_P$ defined by a facet $F$ of $P$,  %the same argument show that
every such function $h$ induces a test configuration for $(D_F, L_P)$ with
\begin{eqnarray*}
\tilde{d}_k &=& k^{n-1}\mathrm{Vol}(F) + \mathcal{O}(k^{n-2}),\\
\tilde{w}_k &=& k^n \int_F (R-h)d\sigma + \mathcal{O}(k^{n-1}).
\end{eqnarray*}

As analogy with Donaldson \cite{D2}, we define the log K-semistability for toric degenerations.
\begin{definition}
$(X_P, D_F, L_P, \beta)$ is {\it log K-semistable for toric degenerations} 
if for every rational convex piecewise linear function $h : P \to \mathbb{R}$, the induced log Futaki invariant is nonpositive:
\[
%2\Bigl(\frac{a_1}{a_0}b_0-b_1\Bigr) + (1-\beta)\Bigl(\tilde{b}_0-\frac{\tilde{a}_0}{a_0}b_0\Bigr) \geq 0.
\frac{\mathrm{Vol}(\partial P)}{\mathrm{Vol}(P)}\int_P h d\nu -\int_{\partial P} h d\sigma 
+(1-\beta) \Bigl( \int_F h d\sigma - \frac{\mathrm{Vol}(F)}{\mathrm{Vol}(P)} \int_P h d\nu \Bigr) \leq 0.
\]
\end{definition}

Then we show the following as a application of Theorem \ref{main}.
\begin{proposition}\label{K-semistability}
If the Chow form of $(X_P, D_F, L_P^i, \beta)$
%, that is $R_{X_P}^{(2n)}\otimes R_{D_F}^{(1-\beta)(n+1)}$ defined by the Kodaira embedding by $L_P^i$, 
is $T_{iP}$-semistable for all $i\gg 0$ then this pair  is K-semistable for toric degenerations.
\end{proposition}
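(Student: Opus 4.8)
The plan is to obtain the log K-semistability inequality as the \emph{subleading} ($i^{n}$) term in the large-$i$ asymptotics of the Chow inequality of Theorem~\ref{main}, evaluated on $g=-h$. Fix a rational convex piecewise linear function $h:P\to\mathbb{R}$, so that $g:=-h$ is concave. Granting for a moment that $g$ lies in $\mathrm{PL}(P,i)$ for suitable $i$, the hypothesis together with Theorem~\ref{main} gives, after multiplying through by $-1$,
\[
E_P(i)\Bigl(2i\int_P h\,d\nu+(1-\beta)\int_F h\,d\sigma\Bigr)\le\Bigl(2i\Vol(P)+(1-\beta)\Vol(F)\Bigr)\sum_{\mathbf{a}\in P\cap(\mathbb{Z}/i)^n}h(\mathbf{a}).
\]
I would then insert the two-term asymptotic expansions of $E_P(i)$ and of $\sum_{\mathbf a}h(\mathbf a)$ into this inequality, observe that the top-order $i^{n+1}$ terms on the two sides coincide and cancel, and read off the comparison of the $i^{n}$-coefficients; this comparison, after dividing by $\Vol(P)$ and rearranging, is exactly the asserted log Futaki inequality.

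Before the asymptotics I would settle the membership $-h\in\mathrm{PL}(P,i)$. Since $h$ is rational and piecewise linear, there is a positive integer $i_0$ such that every vertex of every region of linearity of $g=-h$ lies in $P\cap(\mathbb{Z}/i_0)^n$. For any multiple $i$ of $i_0$, set $\varphi:=g|_{P\cap(\mathbb{Z}/i)^n}$. Because $g$ is concave, any convex combination $\sum_j\lambda_j\varphi(\mathbf a_j)$ of lattice values is $\le g\bigl(\sum_j\lambda_j\mathbf a_j\bigr)$, whence $g_\varphi\le g$; conversely, writing any point $\mathbf x$ as a convex combination of the (lattice) vertices of the linearity cell containing it and using that $g$ is affine there yields $g(\mathbf x)\le g_\varphi(\mathbf x)$. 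Hence $g_\varphi=g$, so $-h\in\mathrm{PL}_{\mathbb{Q}}(P,i)\subset\mathrm{PL}(P,i)$ for all $i\in i_0\mathbb{N}$, and the displayed inequality holds for all such $i$; this is enough, since we only need $i\to\infty$ along an arithmetic progression.

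For the asymptotics I would use the expansions recorded above in this section: $E_P(i)=i^n\Vol(P)+\tfrac{1}{2}i^{n-1}\Vol(\partial P)+\mathcal{O}(i^{n-2})$ and, by the same Euler--Maclaurin computation underlying Donaldson's formulas for $d_k$ and $w_k$, $\sum_{\mathbf a}h(\mathbf a)=i^n\int_P h\,d\nu+\tfrac12 i^{n-1}\int_{\partial P}h\,d\sigma+\mathcal{O}(i^{n-2})$. Substituting, the coefficient of $i^{n+1}$ equals $2\Vol(P)\int_P h\,d\nu$ on both sides and cancels, so the inequality reads $C_n i^n+\mathcal{O}(i^{n-1})\ge 0$ with
\[
C_n=\Vol(P)\int_{\partial P}h\,d\sigma+(1-\beta)\Vol(F)\int_P h\,d\nu-\Vol(\partial P)\int_P h\,d\nu-(1-\beta)\Vol(P)\int_F h\,d\sigma.
\]
Letting $i\to\infty$ forces $C_n\ge 0$, and dividing by $\Vol(P)$ and negating yields precisely the log K-semistability inequality for toric degenerations. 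The one genuinely delicate point is the membership step together with the claim that the two displayed expansions remain valid to second order for a merely piecewise linear $h$; both are handled by the lattice-refinement argument above and by the same boundary Euler--Maclaurin estimate that Donaldson uses, so I expect no further obstacle.
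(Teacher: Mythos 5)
Your proposal is correct and follows essentially the same route as the paper's proof: restrict to the sublattices $(\mathbb{Z}/i)^n$ with $i$ ranging over multiples of a fixed denominator so that $-h\in\mathrm{PL}_{\mathbb{Q}}(P,i)$, apply Theorem \ref{main}, and extract the $i^{n}$-coefficient from the Euler--Maclaurin expansions of $E_P(i)$ and $\sum_{\mathbf{a}}h(\mathbf{a})$ (the paper cites Lemma 3.3 of \cite{ZZ} for these). The only difference is that you spell out the membership step $g_\varphi=g$ explicitly, which the paper delegates to Remark \ref{Q}.
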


\begin{remark}
In particular, the asymptotic log Chow semistability for $(X_P, D_F, L_P^i, \beta)$ implies the log K-semistability for toric degenerations by Proposition \ref{Hilbert-Mumford}.
\end{remark}

\begin{proof}
We first fix $h : P \to \mathbb{R}$ as a rational convex piecewise linear function.
Then there is a positive integer $k$ such that $g:=-h \in \mathrm{PL}_{\mathbb{Q}}(P,k)$ (Remark \ref{Q}).
By Theorem \ref{main}, 
for all $i\gg 0$,
$T_{iP}$-semistability for  the Chow form of $(X_P, D_F, L_P^i, \beta)$ implies that
\begin{equation}\label{weight}
E_P(ik)  \biggl( 2ik \int_P g d\nu + (1-\beta)\int_F g d\sigma \biggr)
- \biggl( 2ik \mathrm{Vol}(P) + (1-\beta)\mathrm{Vol}(F) \biggr) \sum_{\mathbf{a} \in P \cap (\mathbb{Z}/ik)^n} g(\mathbf{a}) \geq 0
\end{equation}
holds for all $i\gg 0$.
By Lemma 3.3 in \cite{ZZ} we note that
\begin{eqnarray*}
E_P(ik) &=&  (ik)^n\mathrm{Vol}(P) + \frac{(ik)^{n-1}}{2}\mathrm{Vol}(\partial P) + \mathcal{O}((ik)^{n-2}),\\
\sum_{\mathbf{a} \in P \cap (\mathbb{Z}/ik)^n} g(\mathbf{a}) &=& (ik)^{n}\int_P gd\nu  + \frac{(ik)^{n-1}}{2} \int_{\partial P} gd\sigma + \mathcal{O}((ik)^{n-1}).
\end{eqnarray*}
Therefore the left hand side of (\ref{weight}) is equal to
\[
\frac{(ik)^n \mathrm{Vol}(P)}{2}
\biggl\{
\frac{\mathrm{Vol}(\partial P)}{\mathrm{Vol}(P)}\int_P g d\nu -\int_{\partial P} g d\sigma 
+(1-\beta) \Bigl( \int_F g d\sigma - \frac{\mathrm{Vol}(F)}{\mathrm{Vol}(P)} \int_P g d\nu \Bigr)
\biggr\} 
+ \mathcal{O}((ik)^{n-1}).
\]
This complete the proof.
\end{proof}
%%%%%%%%%%%%%%%%%%%%%%%%%%%%%%%%%%%%%%%%%%%%%%%%%%%%%%%%%%%%%%%%%%
\section{Relation to the existence of conical K\"{a}hler Einstein metrics}
By recent progresses in Donaldson-Tian-Yau conjecture, 
it is natural to ask the equivalence between the existence of conical K\"{a}hler Einstein metrics and the log Chow stability.
In this section we check the obstruction as in Corollary \ref{cor} %to the asymptotic log Chow semistability 
for a few toric Fano manifolds admitting conical K\"{a}hler Einstein metrics.
See \cite{L} for the definition of conical metrics and more details.

Let $X_P$ be the toric Fano manifold associated from a reflexive polytope $P$ 
(that is a Delzant polytope satisfying $\mathrm{Int}(P)\cap\mathbb{Z}^n = \{0\}$),
and let $D=\sum_t (1-\beta_t)D_{F_t}$ be a formal sum of different toric divisors associated from different facets $F_t$ of $P$ where $\beta_t \in (0,1].$
Let us consider
\[
Q_i(X_P, D) :=
E_P(i)  \biggl( 2i \int_P \mathbf{x} d\nu + \sum_t(1-\beta_t)\int_{F_t} \mathbf{x} d\sigma \biggr)
- \biggl( 2i \mathrm{Vol}(P) + \sum_t (1-\beta_t)\mathrm{Vol}(F_t) \biggr) \sum_{\mathbf{a} \in P \cap (\mathbb{Z}/i)^n} \mathbf{a}.
\]
By Corollary \ref{cor} and Remark \ref{extension}, if the pair $(X_P,  D)$ is asymptotically log Chow semistable then $Q_i$ must vanish for every integer $i$.

\begin{example}
Let $P$ be an interval $[-1,1]$.
The corresponding toric Fano manifold $X_P$ is $\mathbb{C}P^1$.
Let $D=(1-\beta_0)[0]+(1-\beta_{\infty})[\infty]$ be a formal sum of toric divisors
where $[0]$ and $[\infty]$ are divisors of $X_P$ corresponding $-1, 1 \in P$ respectively.
By a simple calculation, we have
\[
Q_i(X_P, D)=\frac{1}{2}(i+1)(\beta_0-\beta_{\infty}).
\]
Thus if this pair $(X_P,D)$ is asymptotic log Chow semistable then $\beta_0=\beta_{\infty}$ must be hold.
This condition $\beta_0=\beta_{\infty}$ is equivalent to the existence of a conical K\"{a}hler Einstein metric whose cone angles on $[0]$ and $[\infty]$ are $\beta_0$ .
\end{example}

\begin{example}
Let $P$ be a polygon defined as
\[
\Set{(x,y) \in \mathbb{R}^2 | -x-y+1\geq 0, \quad x+1\geq 0,\quad x+y+1\geq 0,\quad y+1\geq 0}.
\]
The corresponding toric Fano manifold $X_P$ is the first Hilzebruch surface 
$\mathbb{P}(\mathcal{O}_{\mathbb{C}P^1}\oplus \mathcal{O}_{\mathbb{C}P^1}(-1)).$
%$\mathbb{C}P^2\#\overline{\mathbb{C}P^2}$.
Let $D_1, D_2$ and $D_{\infty}$ be toric divisors corresponding to $P\cap\Set{x+1=0}, P\cap\Set{y+1=0}$ and $P\cap\Set{-x-y+1=0}$ respectively. 
%Note that $X_P$ has the structure of $\mathbb{C}P^1$ bundle over $\mathbb{C}P^1$
Note that $D_1, D_2$ and $D_{\infty}$ are divisors of fibers over $0,\infty\in\mathbb{C}P^1$ and the $\infty$-section respectively.
In a part of \cite[Theorem 3.14]{SW}, Song and Wang proved the existence of a conical K\"{a}hler Einstein metric 
whose cone angles are 13/14, 13/14 and 5/7 along $D_1, D_2$ and $D_{\infty}$ respectively.
Set \[D=(1-\frac{13}{14})D_1+(1-\frac{13}{14})D_2+(1-\frac{5}{7})D_{\infty}.\]
By a simple calculation we have
\[
Q_i(X_P,D)=(\frac{21}{10}i-\frac{1}{7})(1,1)^T.
\]
Therefore this pair $(X_P,D)$ is asymptotically Chow unstable.
\end{example}
%%%%%%%%%%%%%%%%%%%%%%%%%%%%%%%%%%%%%%%%%%%%%%%%%%%%%%%%%%%%%%%%%
%\newpage

%%%%%%%%%%%%%%%%%
\bigskip

\address{
Mathematical Institute \\ 
Tohoku University \\
Sendai 980-8578 \\
Japan
}
{sb3m25@math.tohoku.ac.jp}

\end{document}